\numberwithin{equation}{section}
\newtheorem{lem}{Lemma}[section]
\newtheorem{thm}{Theorem}[section]
\newtheorem{prop}[thm]{Proposition}
\theoremstyle{remark}
\newtheorem{rmk}{Remark}[section]
\renewcommand{\tilde}{\widetilde}
\renewcommand{\hat}{\widehat}
\newcommand{\nn}{\nonumber}
\newcommand{\R}{{\mathbb R}}
\newcommand{\Grad}{\nabla_{\!x}}
\newcommand{\del}{\partial}
\newcommand{\dx}{ \, {\rm d} x}
\newcommand{\dv}{ \, {\rm d} v}
\newcommand{\dvbar}{ \, {\rm d} \bar v}
\newcommand{\ds}{\, {\rm d} s}
\newcommand{\dmu}{\, {\rm d} \mu}
\newcommand{\dtau}{\, {\rm d} \tau}
\newcommand{\dxi}{\, {\rm d} \xi}
\newcommand{\Denote}{\stackrel{\triangle}{=}}
\newcommand{\Ni}{\noindent}
\newcommand{\Ss}{{\mathbb{S}}}
\newcommand{\ii}{{\mathcal I}}
\newcommand{\CalA}{{\mathcal{A}}}
\newcommand{\CalD}{{\mathcal{D}}}
\newcommand{\CalK}{{\mathcal{K}}}
\newcommand{\CalL}{{\mathcal{L}}}
\newcommand{\CalT}{{\mathcal{T}}}
\newcommand{\EpsJ}{J^\Eps}
\newcommand{\VecL}{{\CalL}}
\newcommand{\Eps}{\epsilon}
\newcommand{\NullL} {{\rm Null} \, \VecL}
\newcommand{\abs}[1]{\left\lvert#1\right\rvert}
\newcommand{\norm}[1]{\left\lVert#1\right\rVert}
\newcommand{\vint}[1]{\left\langle#1\right\rangle}
\newcommand{\vpran}[1]{\left(#1\right)}
\DeclareMathOperator{\Span}{span}
\begin{document}

\title{Fractional Diffusion Limits of Non-Classical Transport Equations}

\author{Martin Frank}
\author{Weiran Sun}


\begin{abstract}
We establish asymptotic diffusion limits of the non-classical transport equation derived in \cite{Lar07}. 
By introducing appropriate scaling parameters, the limits will be either regular or fractional diffusion equations depending on the tail behaviour of the path-length distribution. Our analysis uses the Fourier transform combined with a moment method. We conclude with remarks on the diffusion limit of the periodic Lorentz gas equation. 
\end{abstract}

\maketitle

\section{Introduction}
Anomalous diffusion, a diffusion process described by a fractional diffusion equation, has gained a lot of interest recently. Examples include L\'evy glasses \cite{VynBurRibWie12}, plasma physics \cite{CarLynZas01}, spreading of diseases \cite{SchHanDel11}, chemical reactions \cite{AlbMar88}, elementary particle physics \cite{SagBroAmoDav12}, and flight patterns of birds \cite{VisAfaBulMurPriSta96}. Many more examples are contained in the aptly-titled review \cite{MetKla00}.

In most works, the argument for coming up with an equation involving the fractional Laplacian $(-\Delta)^{\alpha/2}$ is a scaling argument: The Green's function associated to the fractional Laplacian has a tail that decays algebraically like $x^{-\alpha}$. If the data has a similar scaling behavior, then the underlying system is modeled by a fractional diffusion equation. Fractional diffusion can be rigorously derived from Continuous Time Random Walks (CTRWs) in the limit of many interactions by some Generalized Central Limit Theorem \cite{MetKla00}. However, there is often no microscopic picture that yields this random walk.

It is therefore a mathematical challenge to provide a microscopic picture, and rigorously derive macroscopic equations. One possible  strategy to address this challenge comes from kinetic theory, where the passage from particle transport in a random medium, via a kinetic description, to macroscopic equations is well understood \cite{Cercignani}. Historically, this has led to many insights, not the least of which is the understanding of the fluid dynamic equations as limits of the Boltzmann equation. 

To our knowledge, the first rigorous mathematical work to prove convergence of solutions of classical transport equations to solutions of fractional diffusion equations is \cite{MelMisMou11}. The authors use a Fourier technique which formally already has been known in the fractional calculus literature (cf.\ \cite{ScaGorMaiRab03}). See also the related works~\cite{AbdMelPue11, NMP2011} where fractional diffusion equations can arise from classical transport equations. 

The starting point for our work is the non-classical transport equation proposed by Larsen \cite{Lar07} (see \eqref{eq2} for the explicit equation). The original motivation for this equation was from measurements of photon path-length in atmospheric clouds, which could not be explained by classical radiative transfer, cf.\ \cite{Pfe99} or sections 5.1 and 8.3 in the review \cite{DavMar10}. Classically, the amount of radiation, when it passes through a medium, is attenuated exponentially. This is the well-known law of Beer-Lambert. Recent measurements, however, have revealed that radiation through an atmospheric cloud is attenuated less, namely merely algebraically \cite{Pfe99}. This has led Larsen to formulate a Boltzmann equation on an extended phase space \cite{Lar07}, which he named non-classical transport equation. The equation is able to model particle transport with given path-length distributions $p(s)$, $s$ being the path-length, and $p$ its probability density function.
Non-classical transport theory has since been extended \cite{LarVas11} and has found applications for neutron transport in pebble bed reactors \cite{VasLar09}, and even computer graphics \cite{Eon14}. 

In his original paper \cite{Lar07}, Larsen has considered the formal diffusion limit of the non-classical transport equation. This has been made rigorous in \cite{FraGou10}. However, the classical analysis cannot capture the case when the second moment, i.e.\ the variance, of the path-length distribution does not exist. The purpose of this paper is to extend the analysis to cover this case and make the limit process rigorous. It will turn out that in the case of an infinite variance of the path-length distribution, the limiting equation is a fractional diffusion equation. This paper therefore provides a connection between non-classical transport and anomalous diffusion. The result is stated mathematically in Section~\ref{sec:formulation}. In Section~\ref{sec:well-posedness} we give a short proof of the well-posedness of the transport equation, which lays down the basic functional setting in this paper. The main part is in Section~\ref{sec:limit} where we establish various limits of the transport equation. 

The connection to a microscopic picture becomes somewhat complete because non-classical transport theory can be connected to random walks in a specific physical medium. Recent results by Golse et al.\ (cf.\ \cite{Gol12} for a review), and by Marklof \& Str\"ombergsson \cite{MarStr11} show that an equation similar to the non-classical transport equation can be derived from particle transport in a regular lattice (the so-called periodic Lorentz gas equation). In 2D, an explicit path-length distribution can be computed. Marklof \& T\'oth \cite{MarTot14} proved a superdiffusive central limit theorem for the particle billiards and showed that the periodic Lorentz gas is superdiffusive (but only logarithmically). We are able to reproduce a result in the same spirit for the simpler case of non-classical transport, using techniques from kinetic theory. We comment on this in Section~\ref{sec:lorentz}.

\section{Main Result} \label{sec:formulation}
The non-classical transport equation with a scaling parameter $\Eps$ as considered in \cite{Lar07} has the form 
\begin{equation}
\begin{gathered} \label{eq2}
\frac{1}{\Eps} \partial_s \psi_\Eps(x,v,s) + v\cdot\nabla_x \psi_\Eps(x,v,s) + \frac{\Sigma_t(s)}{\Eps}\psi_\Eps(x,v,s) 
\\
 = \delta(s)\int_{S^{n-1}}\int_0^\infty \vpran{\sigma(v \cdot v') -\theta(\Eps)(1-c)}\frac{\Sigma_t(s')}{\Eps} \psi_\Eps(x,v',s') \ds' \dv' + \delta(s) \frac{\theta(\Eps)}{\Eps}\frac{Q}{4\pi}.
\end{gathered}
\end{equation}
The unknown function $\psi_\Eps$ is the angular flux of particles at position $x\in\R^n$, moving into direction $v \in S^{n-1}$ (unit vector). The particles interact with a background medium. The interaction of the particles is described by the collision cross section $\Sigma_t$. What makes the equation non-classical is that $\Sigma_t=\Sigma_t(s)$ depends on the distance $s$ from the last collision. 
The angular scattering kernel $\sigma(v \cdot v')$ is independent of $s$. Moreover, the measure $\dv$ is scaled to be the unit measure on $\Ss^{n-1}$ and $\sigma$ satisfies that
\begin{align} \label{assump:sigma}
   \int_{\Ss^{n-1}} \sigma(v \cdot v') \dv = 1 \,.
\end{align}
The equation is completed by the particle source $Q$, and the scattering ratio $c$ (when a particle interacts with the background, the probability that it is absorbed is $1-c$, the probability that it scatters is $c$). We will assume throughout the paper that $c<1$, i.e.\ there is a small amount of absoprtion everywhere. The Dirac delta $\delta(s)$ on the right-hand side models that particles which scatter have their distance-to-previous-collision reset to zero. In the case of constant $\Sigma_t$, this equation reduces to the classical transport equation~\cite{Lar07}.

The parameter $\Eps$ being small means that we have many collisions (small Knudsen number). Extending the scaling in \cite{Lar07} where $\theta(\Eps) = \Eps^2$, we have introduced a general function $\theta(\Eps)$ to scale the obsorption term and the source. We assume that $\theta(\Eps)$ is monotonically increasing with $\Eps$ and $\theta(\Eps)\to 0$ as $\Eps\to 0$. In most cases, we will later use $\theta(\Eps)=\Eps^\alpha$ with $1<\alpha < 2$. Some comments on this particular choice of the scaling are in order: First, as in \cite{Lar07}, we have fixed the scale of $\Sigma_t$ to be $1/\Eps$, which means the scattering mean free path is small. Let $p(s)$ be the path-length distribution defined by
\begin{equation}
p(s) = \Sigma_t(s) \exp(-\int_0^s \Sigma_t(s') \ds') \,.
\end{equation}
The scales of $s$ and $\Sigma_t(s)$ are related in the way such that $p$ integrates to one for any $\Eps$. Thus $s$ has to be rescaled by $\Eps$.
Second, if we rearrange the equation as
\begin{gather*}
\frac{1}{\Eps} \partial_s \psi_\Eps(x,v,s) 
+ v\cdot\nabla_x \psi_\Eps(x,v,s) 
+ \frac{\Sigma_t(s)}{\Eps}\psi_\Eps(x,v,s) 
- \delta(s)\int\int_0^\infty 
                  \sigma(v \cdot v')\frac{\Sigma_t(s')}{\Eps} \psi_\Eps(x,\Omega',s') \ds' \dv' 
 \\
 = \delta(s)\theta(\Eps) \left(  \frac{1}{\Eps}\frac{Q}{4\pi} 
 - (1-c)\int \int_0^\infty 
     \frac{\Sigma_t(s')}{\Eps} \psi_\Eps(x,v',s')\ds' \dv' \right).
\end{gather*}
then it becomes clear that the factor $\theta(\Eps)$ controls the relative weakness of emission/absorption compared to scattering. 
Therefore there can only remain one relative scaling factor, which we have called $\theta(\Eps)$. 

Our main purpose of this paper is to prove the following convergence result as $\Eps \to 0$:
\begin{thm}
Suppose the scattering constant $c$ and the cross section $\sigma$ satisfy the assumptions
\begin{align} \label{assump:sigma}
   0 < c < 1 \,,
\qquad
   \int_{\Ss^{n-1}} \sigma(v \cdot v') \dv' = 1 \,,
\qquad
   \sigma(v\cdot v') \geq \sigma_0 > 0
\end{align}
for some constant $\sigma_0$. Suppose the path-length distribution function $p$ satisfies 
\begin{equation}\label{assump:p}
   \int_0^\infty p(s) \ds = 1, \qquad \int_0^\infty s p(s) \ds < \infty \,,\qquad p(s) = \frac{d_0}{s^{\alpha + 1}} \quad   \text{for $s > 1$,}
\tag{B}
\end{equation}
where $d_0 > 0$ is a constant. 
Let  $\Psi_\Eps(s,x,v) = \psi_\Eps(s,x,v) e^{\int_0^s \Sigma_t(\tau) \dtau}$, where $\psi_\Eps \in L^\infty(0, \infty; L^2(\R^n \times \Ss^{n-1}))$ is the solution to~\eqref{eq2}.
Then there exists $\Psi_0 \in L^2(\R^n)$ which only depends on $x$ such that
\begin{align*}
    \Psi_\Eps \to \Psi_0
\qquad in \,\,
   w^\ast-L^\infty(0, \infty; L^2(\R^n \times \Ss^{n-1})).
\end{align*}
Furthermore, there exists $q \in L^2(\R^n \times \Ss^{n-1})$ such that with the following choices of $\theta(\Eps)$, the limit $\Psi_0$ satisfies the (fractional) diffusion equation
\begin{itemize}
\item[(a)]  $ D_1 (-\Delta) \Psi_0 + (1 - c) \Psi_0 = \int_{\Ss^{n-1}} Q(x, v) \dv$ if $\alpha > 2$ and $\theta(\Eps)=\Eps^2$;
\item[(b)]  $ D_2 (-\Delta)^{\alpha/2} \Psi_0 + (1 - c) \Psi_0 = \int_{\Ss^{n-1}} Q(x, v) \dv$ if $1 < \alpha < 2$ and $\theta(\Eps)=\Eps^\alpha$;
\item[(c)]  $ D_3 (-\Delta) \Psi_0 + (1 - c) \Psi_0 = \int_{\Ss^{n-1}} Q(x, v) \dv$ if $\alpha = 2$ and $\theta(\Eps)=-\Eps^2\ln\Eps$,
\end{itemize}
where the positive coefficients $D_1, D_2, D_3$ can be explicitly computed from $p$ and $\sigma$.
\end{thm}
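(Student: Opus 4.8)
The plan is to collapse the $\Eps$-dependent transport problem onto a single scalar integral equation in Fourier space and then read off the limiting symbol by a moment (solvability) argument. First I would use the substitution $\Psi_\Eps = \psi_\Eps e^{\int_0^s \Sigma_t(\tau)\dtau}$, which cancels the stiff term $\frac{\Sigma_t}{\Eps}\psi_\Eps$ against $\frac1\Eps\del_s\psi_\Eps$ and turns the collision weight $\frac{\Sigma_t(s')}{\Eps}\psi_\Eps$ into $\frac{p(s')}{\Eps}\Psi_\Eps$, so that \eqref{eq2} becomes $\frac1\Eps\del_s\Psi_\Eps + v\cdot\Grad\Psi_\Eps = \delta(s)[\cdots]$. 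Taking the Fourier transform in $x$ (dual variable $k$) and solving the resulting linear ODE in $s$ for $s>0$ gives $\hat\Psi_\Eps(k,v,s) = \phi_\Eps(k,v)\,e^{-i\Eps(v\cdot k)s}$, where $\phi_\Eps(k,v) := \hat\Psi_\Eps(k,v,0)$. Integrating the delta across $s=0$ produces the reset condition at $s=0$, and inserting $\int_0^\infty p(s)\hat\Psi_\Eps\ds = \hat p(\Eps v\cdot k)\phi_\Eps$, with $\hat p(\xi) = \int_0^\infty p(s)e^{-i\xi s}\ds$, closes the system into
\begin{equation*}
\phi_\Eps(k,v) = \int_{\Ss^{n-1}}\bigl(\sigma(v\cdot v') - \theta(\Eps)(1-c)\bigr)\hat p(\Eps v'\cdot k)\,\phi_\Eps(k,v')\ud v' + \theta(\Eps)\frac{\hat Q(k,v)}{4\pi}.
\end{equation*}

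Next I would set up the functional framework around the scattering operator $\CalK f(v) = \int_{\Ss^{n-1}}\sigma(v\cdot v')f(v')\ud v'$. The assumptions \eqref{assump:p} and the hypotheses on $\sigma$ guarantee that $\CalK$ is a self-adjoint Markov operator with $\CalK\One = \One$, a spectral gap, and $\mathrm{Null}(I-\CalK) = \Span\{\One\}$; writing $P f = \int_{\Ss^{n-1}}f\dv$ for the projection onto constants, $I-\CalK$ is boundedly invertible on $\mathrm{Ran}\,(I-P)$. Using the uniform-in-$\Eps$ bound from Section~\ref{sec:well-posedness} to pass to a weak-$\ast$ limit, the $\Eps\to0$ limit of the integral equation forces $\phi_0 := \lim\phi_\Eps$ to lie in $\mathrm{Null}(I-\CalK)$, i.e. to be independent of $v$; this is exactly $\Psi_\Eps\to\Psi_0(x)$. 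To obtain the equation for $\phi_0$ I would apply $P$, using $P\CalK = P$ and $\int_{\Ss^{n-1}}\sigma(v\cdot v')\dv = 1$, which yields the moment relation
\begin{equation*}
\int_{\Ss^{n-1}}\bigl(1 - \hat p(\Eps v\cdot k)\bigr)\phi_\Eps\dv = -\,\theta(\Eps)(1-c)\int_{\Ss^{n-1}}\hat p(\Eps v\cdot k)\phi_\Eps\dv + \theta(\Eps)\,P\frac{\hat Q}{4\pi}.
\end{equation*}
Dividing by $\theta(\Eps)$ and letting $\Eps\to0$, the right-hand side converges to $-(1-c)\phi_0 + P\frac{\hat Q}{4\pi}$, so everything hinges on the left-hand side.

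That left-hand side is controlled by the small-$\xi$ asymptotics of $\hat p$, which is where the three regimes separate. Splitting $\int_0^\infty(1 - e^{-i\xi s})p(s)\ds$ at $s=1$ and using the explicit tail $p(s) = d_0 s^{-\alpha-1}$ for $s>1$, I would establish $1 - \hat p(\xi) = i\bar s\,\xi + \tfrac{\overline{s^2}}{2}\xi^2 + o(\xi^2)$ when $\alpha>2$; $1 - \hat p(\xi) = i\bar s\,\xi + \tfrac{d_0}{2}\xi^2\ln\frac1{\abs\xi} + O(\xi^2)$ when $\alpha=2$; and $1 - \hat p(\xi) = i\bar s\,\xi + d_0\,C_\alpha\,\abs\xi^\alpha\bigl(1 + i\beta_\alpha\,\mathrm{sgn}\,\xi\bigr) + o(\abs\xi^\alpha)$ when $1<\alpha<2$, with $C_\alpha>0$ obtained from $\int_0^\infty(1 - e^{-iu} - iu)u^{-1-\alpha}\ud u$. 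Writing $\phi_\Eps = u_\Eps + w_\Eps$ with $u_\Eps = P\phi_\Eps$ and $w_\Eps = (I-P)\phi_\Eps$, the linear term $i\bar s\,\Eps(v\cdot k)$ integrates to zero against the constant $u_\Eps$ by the symmetry $\int_{\Ss^{n-1}}v\dv = 0$, and the odd imaginary part $\mathrm{sgn}(v\cdot k)\abs{v\cdot k}^\alpha$ likewise integrates to zero over the sphere; hence only the even real parts survive, producing the symbols $\abs k^\alpha$ in case (b) and $\abs k^2$ in cases (a), (c) after dividing by $\theta(\Eps) = \Eps^\alpha,\ \Eps^2,\ -\Eps^2\ln\Eps$ respectively. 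The remaining contribution $\tfrac1{\theta}\int_{\Ss^{n-1}}(1-\hat p)w_\Eps\dv$ needs the size of the flux correction: applying $I-P$ to the integral equation gives $w_\Eps = -(I-\CalK)^{-1}(I-P)\CalK[\,i\bar s\,\Eps(v\cdot k)\phi_0\,] + \text{h.o.t.} = O(\Eps)$, so this term is $O(\Eps^2)/\theta(\Eps)$; it vanishes for $1<\alpha<2$ (since $2-\alpha>0$) and for $\alpha=2$ (by the extra $\abs{\ln\Eps}$), while for $\alpha>2$ it contributes an $O(1)$, $\abs k^2$-proportional correction that merges with the variance term into the single coefficient $D_1$.

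Assembling these limits gives $D\,\abs k^{\alpha}\,\phi_0 + (1-c)\phi_0 = P\tfrac{\hat Q}{4\pi}$ in the fractional regime and the analogous $\abs k^2$ identities otherwise; inverting the Fourier transform and recognizing $\abs k^\alpha$ as the symbol of $(-\Delta)^{\alpha/2}$ yields equations (a)--(c), with $D_1,D_2,D_3>0$ read off from $\overline{s^2}$, $C_\alpha d_0$, and $(I-\CalK)^{-1}$, all manifestly nonnegative. I expect the main obstacle to be the asymptotic analysis of $\hat p$: establishing the heavy-tail expansion with remainders uniform and integrable enough in $k$ to justify passing the $\Eps\to0$ limit under both the velocity integral and the weak-$\ast$ $L^2$ pairing, and, crucially, verifying in each regime that the chosen $\theta(\Eps)$ makes every subleading term (including $w_\Eps$) vanish after division by $\theta(\Eps)$. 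The case $1<\alpha<2$ is the most delicate, since there the diffusive-looking flux correction must be shown to be asymptotically negligible, so that the limit is genuinely driven by the nonlocal tail rather than by local transport.
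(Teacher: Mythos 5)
Your proposal is correct and follows essentially the same route as the paper: reduce to a closed Fourier-space equation for the scattering-averaged trace (your $\hat p(\Eps v\cdot k)$ kernel is exactly the paper's $\CalK_\Eps$), annihilate the singular term by integrating in $v$, extract the limiting symbol from the three-regime small-argument asymptotics of $1-\hat p$ (the paper's Proposition on $\Lambda_\Eps$), and treat the $O(\Eps)$ anisotropic fluctuation separately so that it vanishes for $\alpha\le 2$ and merges into $D_1$ for $\alpha>2$. The only cosmetic difference is that you invert $I-\CalK$ on the full orthogonal complement of the constants, whereas the paper only needs the single eigenfunction $v\cdot\xi$ with eigenvalue $1-\bar\mu_0$.
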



\section{Well-posedness} \label{sec:well-posedness}
In this section we establish the well-posedness of the transport equation in the spaces $L^\infty(0, \infty; L^q(\R^n \times \Ss^{n-1}))$ for any $1 \leq q \leq \infty$. This can be done either by applying the iteration method used in \cite{FraGou10} or by using a fixed-point argument. Here we employ the latter method.  

Let 
$$
\Psi_\Eps(s, x, v) = \psi_\Eps(s, x, v) e^{\int_0^s \Sigma_t(\tau) \dtau}. 
$$
Eq.\ \eqref{eq2} can be re-written as \cite{Lar07}
\begin{equation}
\begin{gathered} \label{eq:transport-1}
   \frac{1}{\Eps}\del_s \Psi_\Eps + v \cdot \Grad \Psi_\Eps = 0 \,,
\\
   \Psi_\Eps(0, x, v) 
  =  \int_0^\infty\int_{\Ss^{n-1}} 
         \vpran{ \sigma(v \cdot v') - \theta(\Eps) (1-c)} p(\tau) \Psi_\Eps(\tau, x, v') \dv'\dtau
       + \theta(\Eps) Q(x, v) \,.
\end{gathered}
\end{equation}
We can further re-formulate equation~\eqref{eq:transport-1} using characteristics. This gives
\begin{align} \label{def:soln}
   \Psi_\Eps(s, x, v)
   = \int_0^\infty\int_{\Ss^{n-1}} 
         \vpran{\sigma(v \cdot v') - \theta(\Eps) (1-c)} p(\tau)
         \Psi_\Eps(\tau, x - \Eps vs, v') \dv'\dtau
       + \theta(\Eps) Q(x- \Eps vs, v) \,.
\end{align}
It is this last formulation that we will use to carry out our analysis in this paper. 

The well-posedness result is 
\begin{thm} \label{thm:well-posedness}
Suppose the scattering coefficient $c$ and the cross section $\sigma \geq 0$ satisfy the conditions
\begin{align} \label{assump:sigma-1}
   0 < c < 1 \,,
\qquad
   \int_{\Ss^{n-1}} \sigma(v \cdot v') \dv = 1 \,,
\qquad
   \sigma(v\cdot v') \geq \sigma_0 > 0 
\end{align}
for some constant $\sigma_0> 0$. Suppose the path-length distribution function $p$ and the source term satisfy   
\begin{align} \label{cond:p-Q}
   \int_{0}^\infty p(s) \ds = 1 \,,
\qquad
   Q \in L^q(\R^n \times \Ss^{n-1})
\quad \,
   \text{for any $1 \leq q \leq \infty$}.
\end{align}
Then for each fixed $\Eps > 0$ small enough such that $\sigma - \theta(\Eps) (1-c) \geq 0$, equation~\eqref{eq:transport-1} has a unique solution $\Psi_\Eps \in L^\infty((0, \infty); L^q(\R^n \times \Ss^{n-1}))$ in the sense of \eqref{def:soln}.  Moreover, $\Psi_\Eps$ satisfies the uniform-in-$\Eps$ bound 
\begin{align} \label{bound:Psi-Eps}
   \norm{\Psi_\Eps}_{L^\infty(0, \infty; L^q(\R^n \times \Ss^{n-1}))}
\leq  
   \frac{1}{1-c} \norm{Q}_{L^q(\R^n \times \Ss^{n-1})} \,,
\qquad  
   1 \leq q \leq \infty \,.
\end{align}
Furthermore, if $Q \geq 0$, then $\Psi_\Eps \geq 0$.
\end{thm}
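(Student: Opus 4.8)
\section*{Proof proposal for Theorem~\ref{thm:well-posedness}}

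The plan is to read the integral formulation~\eqref{def:soln} as a fixed-point equation $\Psi_\Eps = \mathcal{T}\Psi_\Eps$ on the Banach space $X_q := L^\infty(0,\infty; L^q(\R^n \times \Ss^{n-1}))$ and to apply the Banach fixed-point theorem. Here $\mathcal{T}$ acts on $\Phi \in X_q$ by
\begin{align*}
   (\mathcal{T}\Phi)(s,x,v)
   = \int_0^\infty\int_{\Ss^{n-1}}
       \vpran{\sigma(v\cdot v') - \theta(\Eps)(1-c)} p(\tau)\, \Phi(\tau, x-\Eps v s, v') \dv'\dtau
     + \theta(\Eps) Q(x-\Eps v s, v) \,.
\end{align*}
The structural observation that drives everything is that, for $\Eps$ small enough that $K(v,v'):=\sigma(v\cdot v') - \theta(\Eps)(1-c) \geq 0$, the kernel $K$ is nonnegative, and by the normalization in~\eqref{assump:sigma-1} together with $\int_0^\infty p\,\dtau = 1$ it has total mass
\begin{align*}
   \int_0^\infty\int_{\Ss^{n-1}} K(v,v')\, p(\tau) \dv'\dtau = 1 - \theta(\Eps)(1-c) =: M < 1 \,,
\end{align*}
for every $v$; by the rotational symmetry of $\sigma$ one also has $\int_{\Ss^{n-1}} K(v,v')\dv = M$.

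For the contraction estimate I would bound $\mathcal{T}\Phi_1 - \mathcal{T}\Phi_2$, whose source term cancels. Writing $g = \Phi_1 - \Phi_2$ and treating $K(v,v')p(\tau)\dv'\dtau/M$ as a probability measure, Jensen's inequality gives, for $1 \leq q < \infty$,
\begin{align*}
   \abs{(\mathcal{T}\Phi_1 - \mathcal{T}\Phi_2)(s,x,v)}^q
   \leq M^{q-1} \int_0^\infty\int_{\Ss^{n-1}} K(v,v')\, p(\tau)\, \abs{g(\tau, x-\Eps v s, v')}^q \dv'\dtau \,.
\end{align*}
Integrating in $x$, where the shift $x \mapsto x-\Eps v s$ is measure-preserving, then in $v$ using $\int_{\Ss^{n-1}} K\,\dv = M$, and finally using $\int_0^\infty p\,\dtau = 1$ together with the supremum over $\tau$, yields $\norm{\mathcal{T}\Phi_1 - \mathcal{T}\Phi_2}_{X_q} \leq M \norm{\Phi_1-\Phi_2}_{X_q}$ with $M<1$; the case $q=\infty$ follows from the same pointwise bound directly. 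Thus $\mathcal{T}$ is a contraction on $X_q$ and possesses a unique fixed point $\Psi_\Eps \in X_q$, which is the desired solution in the sense of~\eqref{def:soln}.

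The uniform-in-$\Eps$ bound~\eqref{bound:Psi-Eps} follows by running the same computation on $\mathcal{T}$ itself rather than on a difference: the linear part contributes at most $M\norm{\Psi_\Eps}_{X_q}$ and the source at most $\theta(\Eps)\norm{Q}_{L^q}$, so that $\norm{\Psi_\Eps}_{X_q} \leq M\norm{\Psi_\Eps}_{X_q} + \theta(\Eps)\norm{Q}_{L^q}$. Since $1-M = \theta(\Eps)(1-c)$, the factor $\theta(\Eps)$ cancels and leaves $\norm{\Psi_\Eps}_{X_q} \leq \tfrac{1}{1-c}\norm{Q}_{L^q}$, uniformly in $\Eps$. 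For the positivity statement, observe that when $Q \geq 0$ and $K \geq 0$ the operator $\mathcal{T}$ maps the closed cone of nonnegative functions of $X_q$ into itself; starting the Picard iteration from $\Psi^{(0)}=0$ keeps every iterate nonnegative, and since the cone is closed the limit $\Psi_\Eps$ is nonnegative as well.

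The only genuinely delicate point is the contraction constant: the entire argument hinges on the kernel mass being strictly below $1$. This is exactly where the absorption $c<1$ enters, producing the gap $\theta(\Eps)(1-c)>0$, and where the smallness of $\Eps$ is needed, since it guarantees $K\geq 0$ so that the mass equals $M$ rather than the possibly larger quantity $\int_{\Ss^{n-1}}\abs{\sigma(v\cdot v') - \theta(\Eps)(1-c)}\dv'$. The measure-preserving character of the characteristic shift in $x$ is what decouples the $x$-integration from the kernel and keeps the estimate uniform across all $L^q$.
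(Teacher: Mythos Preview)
Your proposal is correct and follows essentially the same route as the paper: define the operator $\CalT$ from~\eqref{def:soln}, show it is a strict contraction on $L^\infty(0,\infty;L^q(\R^n\times\Ss^{n-1}))$ with constant $M=1-\theta(\Eps)(1-c)$, read off the uniform bound from the fixed-point inequality, and obtain positivity because $\CalT$ preserves the nonnegative cone. The only cosmetic difference is that the paper packages the key estimate as a separate lemma proved via Minkowski and H\"older, whereas you obtain the same bound directly by treating $K(v,v')p(\tau)\dv'\dtau/M$ as a probability measure and invoking Jensen; the two are equivalent here.
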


Before proving Theorem~\ref{thm:well-posedness}, we state a simple lemma that will be used frequently in this paper:
\begin{lem} \label{lem:CS}
Suppose $c, \sigma, p$ satisfy \eqref{assump:sigma-1}-\eqref{cond:p-Q} and $u \in L^\infty\vpran{0, \infty; L^q(\R^n \times \Ss^{n-1})}$ for any $1 \leq q \leq \infty$.~Then 
\begin{align} \label{bound:CS}
& \quad \,
   \norm{\int_0^\infty\int_{\Ss^{n-1}} 
         \vpran{\sigma(v \cdot v')  - \theta(\Eps) (1-c)}p(\tau)
         u(\tau, x - \Eps vs, v') \dv'\dtau}_{L^\infty(0, \infty; L^q(\R^n \times \Ss^{n-1}))} \nn
\\[2pt]
&\leq
    \vpran{1 - \theta(\Eps) (1-c)} \norm{u}_{L^\infty(0, \infty; L^q(\R^n \times \Ss^{n-1}))}
\end{align}
for any $\Eps > 0$. 
In the limit case $c=1$ we have
\begin{align} \label{bound:CS-1}
   \norm{\int_0^\infty\int_{\Ss^{n-1}} 
         \sigma(v \cdot v') p(\tau)
         u(\tau, x - vs, v') \dv'\dtau}_{L^\infty(0, \infty; L^q(\R^n \times \Ss^{n-1}))}
&\leq
    \norm{u}_{L^\infty(0, \infty; L^q(\R^n \times \Ss^{n-1}))} \,.
\end{align}
\end{lem}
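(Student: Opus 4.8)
The plan is to recognize the operator on the left of \eqref{bound:CS} as an averaging operator against a nonnegative weight of total mass $1-\theta(\Eps)(1-c)$, so that the estimate reduces to Jensen's inequality combined with the translation invariance of Lebesgue measure in $x$. Write $K(v,v')=\sigma(v\cdot v')-\theta(\Eps)(1-c)$ for the angular kernel. The first observation is that, for $\Eps$ small enough that $\theta(\Eps)(1-c)\leq\sigma_0$ (which holds eventually since $\theta(\Eps)\to0$ while $\sigma\geq\sigma_0$), one has $K\geq0$, and hence $K(v,v')p(\tau)\geq0$. Its total mass follows directly from the normalizations in \eqref{assump:sigma-1} and \eqref{cond:p-Q}, together with $\int_{\Ss^{n-1}}\dv'=1$:
\begin{align*}
   \int_0^\infty\int_{\Ss^{n-1}} K(v,v')\,p(\tau)\dv'\dtau
 = \vpran{\int_0^\infty p(\tau)\dtau}\vpran{\int_{\Ss^{n-1}}\sigma(v\cdot v')\dv' - \theta(\Eps)(1-c)}
 = 1-\theta(\Eps)(1-c),
\end{align*}
which is precisely the constant on the right of \eqref{bound:CS}.

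With this in hand I would dispatch the two endpoints and then all $q$ uniformly. For $q=\infty$ the bound is immediate: moving the absolute value inside and estimating $|u|$ by its norm leaves exactly the total mass $1-\theta(\Eps)(1-c)$. For finite $q$, I would renormalize the weight to the probability measure $\dmu_v=K(v,v')p(\tau)\dv'\dtau/\vpran{1-\theta(\Eps)(1-c)}$ and apply Jensen's inequality to the convex function $t\mapsto|t|^q$; this moves $|\cdot|^q$ inside at the cost of a factor $\vpran{1-\theta(\Eps)(1-c)}^{q-1}$. Integrating the resulting pointwise inequality in $x$ over $\R^n$, the shift $x\mapsto x-\Eps vs$ is measure preserving at fixed $v,s$, so $\int_{\R^n}|u(\tau,x-\Eps vs,v')|^q\dx=\norm{u(\tau,\cdot,v')}_{L^q(\R^n)}^q$. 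Integrating next in $v$ and using $\int_{\Ss^{n-1}}\sigma(v\cdot v')\dv=1$ (the same normalization holds in $v$ by the symmetry of $\sigma(v\cdot v')$) collapses the angular factor to another $1-\theta(\Eps)(1-c)$, and finally $\int_0^\infty p(\tau)\dtau=1$ together with the uniform-in-$\tau$ bound on $u$ closes the estimate. Collecting the powers of $1-\theta(\Eps)(1-c)$, taking the $q$-th root, and then the supremum over $s$ gives \eqref{bound:CS}.

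The limiting identity \eqref{bound:CS-1} follows from the same argument with the absorption term switched off: the kernel is just $\sigma(v\cdot v')p(\tau)$, nonnegative with total mass exactly one, so the operator is an honest average and the contraction constant becomes $1$.

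The step I expect to need the most care is the nonnegativity of $K$, on which Jensen's inequality (and the clean constant) depends: for large $\Eps$ with $\theta(\Eps)(1-c)>\sigma_0$ the kernel can change sign, one would have to estimate by $|K|$, and the sharp factor $1-\theta(\Eps)(1-c)$ would be lost---indeed it even turns negative once $\theta(\Eps)(1-c)>1$. Since every later application uses $\Eps\to0$, I would simply record the estimate in the regime $\theta(\Eps)(1-c)\leq\sigma_0$. The only other points worth stating explicitly are that the two normalizations of $\sigma$---in $v'$ (assumed) and in $v$ (by symmetry)---are both used, and that the change of variables in $x$ must be carried out before the $v$-integration, at fixed $v$, so that the measure-preserving property applies.
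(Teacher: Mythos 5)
Your proposal is correct and follows essentially the same route as the paper: both arguments rest on the kernel $\vpran{\sigma(v\cdot v')-\theta(\Eps)(1-c)}p(\tau)$ being nonnegative with total mass $1-\theta(\Eps)(1-c)$, translation invariance of the $x$-integral, and a convexity inequality in $\tau,v'$ (your Jensen step is the same computation as the paper's Minkowski-plus-H\"older step, yielding the factor $(1-\theta(\Eps)(1-c))^{1/q^\ast}\cdot(1-\theta(\Eps)(1-c))^{1/q}$). Your caveat about needing $\sigma-\theta(\Eps)(1-c)\geq 0$ is apt and matches the paper, whose proof also assumes this sign condition despite the lemma's ``for any $\Eps>0$'' phrasing.
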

\begin{proof}
This result follows directly from the Minkowski and Cauchy-Schwarz inequalities. Denote 
\begin{align*}
     \tilde \sigma = \sigma - \theta(\Eps) (1-c) \geq 0 \,.
\end{align*}
The case $p=\infty$ follows directly from the normalization conditions for $\sigma$ and $p$. If $1 \leq q < \infty$, then integrating in $x$ gives
\begin{align*}
   \norm{\int_0^\infty\int_{\Ss^{n-1}} 
         \tilde \sigma(v \cdot v') p(\tau)
         u(\tau, x-\Eps vs, v') \dv'\dtau}_{L^q(\R^n)}
&\leq
  \int_0^\infty\int_{\Ss^{n-1}} 
         \tilde \sigma(v \cdot v') p(\tau)
         \norm{u(\tau, \cdot, v')}_{L^q(\R^n)} \dv'\dtau
\\
& \hspace{-6cm}
\leq 
   \abs{\int_0^\infty\int_{\Ss^{n-1}} 
         \tilde \sigma(v \cdot v') p(\tau) \dv'\dtau}^{1/{q^\ast}}
   \abs{\int_0^\infty\int_{\Ss^{n-1}} 
         \tilde \sigma(v \cdot v') p(\tau)
         \norm{u(\tau, \cdot, v')}_{L^q(\R^n)}^q \dv'\dtau}^{1/q}
\\
& \hspace{-6cm}
=  \vpran{1 - \theta(\Eps) (1-c)}^{\frac{1}{q^\ast}} \abs{\int_0^\infty\int_{\Ss^{n-1}} 
         \tilde \sigma(v \cdot v') p(\tau)
         \norm{u(\tau, \cdot, v')}_{L^q(\R^n)}^q \dv'\dtau}^{1/q}
\end{align*}
where $\frac{1}{q} + \frac{1}{q^\ast} = 1$. Then by integrating in $v$  we get
\begin{align*} 
   \norm{\int_0^\infty \!\!\!\! \int_{\Ss^{n-1}}  \!
         \sigma(v \cdot v') p(\tau)
         u(\tau, x - \Eps vs, v') \dv'\dtau}_{L^q(\R^n \times \Ss^{n-1})}
&\leq
    \vpran{1 - \theta(\Eps) (1-c)} \norm{u}_{L^\infty(0, \infty; L^q(\R^n \times \Ss^{n-1}))},
\end{align*}
which gives the desired bound.
\end{proof}

Now we proceed to prove Theorem~\ref{thm:well-posedness}.
\begin{proof}[Proof of Theorem~\ref{thm:well-posedness}]
For each fixed $\Eps > 0$ small enough such that $\sigma - \theta(\Eps) (1-c) \geq 0$, define the operator $\CalT$ on $L^\infty(0, \infty; L^q(\R^n \times \Ss^{n-1}))$ as
\begin{align*}
    \CalT u 
   = \int_0^\infty\int_{\Ss^{n-1}} 
         \vpran{\sigma(v \cdot v') - \theta(\Eps) (1-c)} p(\tau)
         u(\tau, x - \Eps vs, v') \dv'\dtau
       + \theta(\Eps) Q(x- \Eps vs, v) \,.
\end{align*}
We will show that $\CalT$ maps $L^\infty(0, \infty; L^q(\R^n \times \Ss^{n-1}))$ into itself and $\CalT$ is a contraction mapping. 

First, the source term in $\CalT$ satisfies
\begin{align} \label{bound:T-0}
    \int_{\Ss^{n-1}} \int_{\R^n}
       \abs{Q(x- \Eps vs, v)}^q \dx \dv
   = \int_{\Ss^{n-1}} \int_{\R^n} \abs{Q(x, v)}^q \dx \dv \,.
\end{align}
Applying Lemma~\ref{lem:CS} to the integral term in $\CalT$, we have
\begin{align} \label{bound:T-2}
    \norm{\CalT u}_{L^\infty(0, \infty; L^q(\R^n \times \Ss^{n-1}))}
\leq
   (1 - \theta(\Eps) (1-c)) \norm{u}_{L^\infty(0, \infty; L^q(\R^n \times \Ss^{n-1}))}
   + \theta(\Eps) \norm{Q}_{L^q(\R^n \times \Ss^{n-1})}
< \infty \,.
\end{align}
Hence $\CalT$ maps $L^\infty(0, \infty; L^q(\R^n \times \Ss^{n-1}))$ into itself. 

Next, we show that $\CalT$ is a contraction mapping. To this end, let $u, v \in L^\infty(0, \infty; L^q(\R^n \times \Ss^{n-1}))$. Applying \eqref{bound:CS} to $u-v$, we get
\begin{align*}
   \norm{\CalT u - \CalT v}_{L^\infty(0, \infty; L^q(\R^n \times \Ss^{n-1}))}
\leq 
  (1- \theta(\Eps) (1-c)) \norm{u-v}_{L^\infty(0, \infty; L^q(\R^n \times \Ss^{n-1}))} \,.
\end{align*}
which shows that $\CalT$ is a contraction mapping since the coefficient satisfies $1- \theta(\Eps) (1-c) < 1$ for each fixed $\Eps > 0$. 
We can now apply the fixed-point theorem to conclude that equation~\eqref{def:soln} has a unique solution $\Psi_\Eps \in L^\infty(0, \infty; L^q(\R^n \times \Ss^{n-1}))$ for any $1 \leq q \leq \infty$.

The uniform bound of $\Psi_\Eps$ in \eqref{bound:Psi-Eps} follows directly from \eqref{bound:T-2} with $\CalT u$  and $u$ in the inequality being replaced by $\Psi_\Eps$. The positivity of $\Psi_\Eps$ can be obtained by noting that the mapping $\CalT$ preserves positivity if $Q \geq 0$. Hence the unique solution obtained through iterations in the fixed-point argument must be non-negative. 
\end{proof}

\section{Passing to the Limit} \label{sec:limit}
In this section we show the limit of \eqref{def:soln} as $\Eps \to 0$. Roughly speaking, the main result is to recover a regular or fractional diffusion equation in the limit for the quantity $\int_{(0, \infty) \times \Ss^{n-1}} \psi_\Eps(s, x, v) \dv\ds$. 
Throughout this section, 
we assume that $Q \in L^1 \cap L^2 (\R^n \times \Ss^{n-1})$. Then by Theorem~\ref{thm:well-posedness}, the solution $\Psi_\Eps$ is uniformly bounded in $L^\infty(0, \infty; L^1 \cap L^2(\R^n \times \Ss^{n-1}))$ with bounds satisfying \eqref{bound:Psi-Eps} for $q=1,2$. 

First, we show the convergence of $\Psi_\Eps$ as $\Eps \to 0$. 
\begin{thm} \label{thm:Psi}
Let $Q \in L^1 \cap L^2(\R^n \times \Ss^{n-1})$. Then there exists a subsequence $\Psi_{\Eps_k}$ and $\Psi_0=\Psi_0(x)$ such that 
\begin{align} \label{converg:Psi-Eps-k}
    \Psi_{\Eps_k} \to \Psi_0 
\qquad
    \text{in $w^\ast-L^\infty(0, \infty; L^2(\R^n \times \Ss^{n-1}))$.}
\end{align}
\end{thm}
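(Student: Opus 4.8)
The plan is to combine a weak-$*$ compactness argument with an identification of the limit that is forced by the two stiff mechanisms in the equation: the fast $s$-transport $\tfrac1\Eps\del_s$ and the scattering operator. First I would invoke the uniform bound \eqref{bound:Psi-Eps} from Theorem~\ref{thm:well-posedness} with $q=2$, which says that $\{\Psi_\Eps\}$ is bounded in $L^\infty(0,\infty;L^2(\R^n\times\Ss^{n-1}))$, the dual of the separable space $L^1(0,\infty;L^2(\R^n\times\Ss^{n-1}))$. By the Banach--Alaoglu theorem there is a sequence $\Eps_k\to0$ and a limit $\Psi_0$ with $\Psi_{\Eps_k}\rightharpoonup\Psi_0$ weak-$*$. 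It then remains to show that $\Psi_0$ is independent of both $s$ and $v$.

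For the $s$-independence I would use \eqref{eq:transport-1} multiplied by $\Eps$, namely $\del_s\Psi_\Eps+\Eps\,v\cdot\Grad\Psi_\Eps=0$. Testing against $\phi\in C_c^\infty\vpran{(0,\infty)\times\R^n\times\Ss^{n-1}}$ and integrating by parts in $s$ and $x$ — the boundary terms at $s=0,\infty$ vanishing since $\phi$ is compactly supported in $s$ — yields $\int\Psi_\Eps\,\del_s\phi=-\Eps\int\Psi_\Eps\,v\cdot\Grad\phi$, an identity that can also be checked directly from \eqref{def:soln} via the change of variables $y=x-\Eps vs$. Sending $\Eps=\Eps_k\to0$, the left side converges to $\int\Psi_0\,\del_s\phi$ by weak-$*$ convergence (since $\del_s\phi\in L^1(0,\infty;L^2)$), while the right side is $O(\Eps_k)\to0$ because $\norm{\Psi_{\Eps_k}}$ is bounded. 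Hence $\del_s\Psi_0=0$ in $\mathcal{D}'$, so $\Psi_0=\Psi_0(x,v)$.

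The heart of the argument, and the step I expect to be the main obstacle, is the $v$-independence, for which I would pass to the limit directly in the integral formulation \eqref{def:soln}. Pairing \eqref{def:soln} with a test function $\phi(s,x,v)$ compactly supported in $s$ and changing variables $y=x-\Eps vs$, the pairing becomes $\int\Psi_\Eps(\tau,y,v')\,\Phi_\Eps(\tau,y,v')$ with $\Phi_\Eps(\tau,y,v')=p(\tau)\int_0^\infty\int_{\Ss^{n-1}}\sigma(v\cdot v')\,\phi(s,y+\Eps vs,v)\dv\ds$, plus absorption and source contributions that carry the prefactor $\theta(\Eps)\to0$ and so vanish. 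The delicate point is the shift $y\mapsto y+\Eps vs$: because $s$ ranges over all of $(0,\infty)$ the displacement $\Eps vs$ is not uniformly small, but restricting to $\phi$ supported in $s\in[0,S_0]$ bounds it by $\Eps S_0$, so that $\Phi_{\Eps_k}\to\Phi_0:=p(\tau)\int_0^\infty\int_{\Ss^{n-1}}\sigma(v\cdot v')\phi(s,y,v)\dv\ds$ strongly in $L^1(0,\infty;L^2)$ by continuity of translation (such $\phi$ being dense in the predual). Combining this strong convergence with the weak-$*$ convergence of $\Psi_{\Eps_k}$, and using the normalizations $\int_0^\infty p(\tau)\dtau=1$ and $\int_{\Ss^{n-1}}\sigma(v\cdot v')\dv'=1$ from \eqref{assump:sigma-1}, the limit identity collapses to
\[
   \Psi_0(x,v)=\int_{\Ss^{n-1}}\sigma(v\cdot v')\,\Psi_0(x,v')\dv'
\qquad\text{for a.e.\ } x .
\]

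Finally I would show that this fixed-point relation forces $\Psi_0$ to be constant in $v$. Writing $g=\Psi_0(x,\cdot)$ and $\CalK g(v)=\int_{\Ss^{n-1}}\sigma(v\cdot v')g(v')\dv'$, the symmetry and normalization of $\sigma$ give the Dirichlet-form identity
\[
   0=\vint{(I-\CalK)g,\,g}
   =\frac12\int_{\Ss^{n-1}}\int_{\Ss^{n-1}}\sigma(v\cdot v')\bigl(g(v)-g(v')\bigr)^2\dv\dv' .
\]
Since $\sigma\ge\sigma_0>0$ by \eqref{assump:sigma-1}, the nonnegative integrand must vanish, so $g(v)=g(v')$ for a.e.\ $v,v'$; that is, $g$ is constant and $\Psi_0=\Psi_0(x)\in L^2(\R^n)$, as claimed. (This strict positivity of $\sigma$ is exactly what rules out nonconstant fixed points, i.e.\ it plays the role of an irreducibility/Perron--Frobenius hypothesis.)
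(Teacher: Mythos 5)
Your proof is correct, and its skeleton is the same as the paper's: extract a weak-$*$ limit via Banach--Alaoglu from the uniform bound \eqref{bound:Psi-Eps}, pass to the limit in the mild formulation by putting the $O(\Eps s)$ translation onto a compactly supported test function (the paper does this on the left-hand side of \eqref{def:soln-1}, estimating $I_{2,\Eps}$ by $\norm{h_2}_{C^1}$; you do the mirror-image manipulation on the right-hand side of \eqref{def:soln} via the change of variables $y=x-\Eps vs$ --- same idea), and arrive at the fixed-point relation \eqref{eq:Psi-0}. Two genuine differences are worth noting. First, you prove $s$-independence separately from $\del_s\Psi_\Eps+\Eps\,v\cdot\Grad\Psi_\Eps=0$; this is fine but redundant, since the right-hand side of \eqref{eq:Psi-0} is manifestly $s$-independent and the paper gets this for free. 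Second, and more substantively, for the $v$-independence the paper appeals to the non-negativity of $\Psi_\Eps$ together with the fact that $\sigma(v\cdot v')p(\tau)\dv'\dtau$ is a probability measure (a maximum-principle argument), whereas you use the Dirichlet-form identity $0=\vint{(\ii-\CalK)g,g}=\tfrac12\int\int\sigma(v\cdot v')(g(v)-g(v'))^2\dv\dv'$ together with $\sigma\geq\sigma_0>0$. Your route is self-contained and does not require $\Psi_\Eps\geq0$ --- a hypothesis the theorem as stated does not guarantee, since $Q\geq0$ is not assumed --- so it is arguably the more robust justification of that final step.
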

\begin{proof}
The convergence of a subsequence of $\Psi_\Eps$ is guaranteed by the uniform bound of $\Psi_\Eps$ in~\eqref{bound:Psi-Eps}. Therefore there exists $\Psi_0 \in L^\infty(0, \infty; L^2(\R^n \times \Ss^{n-1}))$ such that~\eqref{converg:Psi-Eps-k} holds. What remains to show is that the limiting function $\Psi_0$ is independent of $s, v$. Our main goal is to prove that $\Psi_0$ satisfies
\begin{align} \label{eq:Psi-0}
    \Psi_0(s, x, v) = \int_0^\infty \int_{\Ss^{n-1}} \sigma(v\cdot v') p(\tau) \Psi_0(\tau, x, v') \dv' \dtau \,.
\end{align}
Then the non-negativity of $\Psi_\Eps$ and the unity of the measure $\sigma(v \cdot v') p(\tau) \dv' \dtau$ imply that $\Psi_0 = \Psi_0(x)$. 

In order to show~\eqref{eq:Psi-0}, we recall that $\Psi_\Eps$ satisfies
\begin{align} \label{def:soln-1}
   \Psi_\Eps(s, x + \Eps vs, v)
   = \int_0^\infty\int_{\Ss^{n-1}} 
         \vpran{\sigma(v \cdot v') - \theta(\Eps) (1-c)} p(\tau)
         \Psi_\Eps(\tau, x, v') \dv'\dtau
       + \theta(\Eps) Q(x, v) \,.
\end{align}
We will prove that~\eqref{def:soln-1} converges to~\eqref{eq:Psi-0} as $\Eps \to 0$ in the sense of distributions. First we study the convergence of the right-hand side of~\eqref{def:soln-1}. Note that the right-hand side of~\eqref{def:soln-1} is independent of $s$. Hence any convergence is uniform in $s$.  The terms associated with $\theta(\Eps)$ satisfy 
\begin{align*}
   \theta(\Eps) Q \to 0 \,,
\qquad
   \theta(\Eps) (1-c) \int_0^\infty \int_{\Ss^{n-1}} p(\tau) \Psi_\Eps(\tau, x, v') \dv'\dtau \to 0
\quad 
   \text{in $L^\infty(0, \infty; L^2(\R^n \times \Ss^{n-1}))$}
\end{align*}
by the uniform bound in~\eqref{bound:Psi-Eps} and~\eqref{bound:CS-1}. Next, for any $h_1(x, v) \in L^2(\R^n \times \Ss^{n-1})$, 
\begin{align} \label{converg:h-1}
& \quad \,
   \int_{\R^n} \int_{\Ss^{n-1}} h_1(x, v) \vpran{\int_0^\infty \int_{\Ss^{n-1}} \sigma(v \cdot v') p(\tau) \Psi_\Eps(\tau, x, v') \dv'\dtau} \dv\dx \nn
\\
& = \int_0^\infty \int_{\R^n} \int_{\Ss^{n-1}}     
          \vpran{\int_{\Ss^{n-1}} h_1(x, v) \sigma(v \cdot v') \dv} p(\tau)
          \Psi_\Eps(\tau, x, v') \dv'\dx\dtau 
\\
& \to \int_0^\infty \int_{\R^n} \int_{\Ss^{n-1}}     
          \vpran{\int_{\Ss^{n-1}} h_1(x, v) \sigma(v \cdot v') \dv} p(\tau)
          \Psi_0(\tau, x, v') \dv'\dx\dtau \,. \nn
\end{align}
where the last step follows from~\eqref{converg:Psi-Eps-k} and Lemma~\ref{lem:CS}, since by~\eqref{bound:CS-1} (with $s=0$) we have
\begin{align*}
   \vpran{\int_{\Ss^{n-1}} h_1(x, v) \sigma(v \cdot v') \dv'} p(\tau)  
 \in L^1(0, \infty; L^2(\dx\dv)) \,.
\end{align*}
Therefore as $\Eps \to 0$,
\begin{align} \label{limit:RHS}
     \text{RHS of ~\eqref{def:soln-1} $\longrightarrow$ RHS of ~\eqref{eq:Psi-0} in $\CalD'$.}
\end{align}
Next we show the convergence of the left-hand side of~\eqref{def:soln-1}. To this end, let $h_2 \in ~C_c^\infty((0, \infty) \times \R^n \times \Ss^{n-1})$. 
Then the left-hand side term satisfies
\begin{align*}
& \quad \,
   \int_0^\infty \int_{\R^n} \int_{\Ss^{n-1}}
       h_2(s, x, v) \Psi_\Eps(s, x + \Eps vs, v) \dv\dx\ds
\\
& = \int_0^\infty \int_{\R^n} \int_{\Ss^{n-1}}
       h_2(s, x, v) \Psi_\Eps(s, x, v) 
     + \int_0^\infty \int_{\R^n} \int_{\Ss^{n-1}}
       h_2(s, x, v) \vpran{\Psi_\Eps(s, x + \Eps vs, v) - \Psi_\Eps(s, x, v)} 
\\
& \Denote  I_{1, \Eps} + I_{2, \Eps} \,.
\end{align*}
By \eqref{converg:Psi-Eps-k}, the limit of $I_{1, \Eps}$ is
\begin{align} \label{limit:I-1-Eps}
    I_{1, \Eps} 
\to \int_0^\infty \int_{\R^n} \int_{\Ss^{n-1}}
       h_2(s, x, v) \Psi_0(s, x, v) \dv\dx\ds \,.
\end{align}
Denote the compact support of $h_2$ as $\Omega$. Then the limit of $I_{2, \Eps}$ is
\begin{align} \label{limit:I-2-Eps}
  \abs{ I_{2, \Eps}}
&= \abs{\int_0^\infty \int_{\R^n} \int_{\Ss^{n-1}}
       \vpran{h_2(s, x - \Eps vs, v) - h_2(s, x, v)} \Psi_\Eps(s, x, v) \dv\dx\ds}  \nn
\\
& \leq
   \Eps C_1(\Omega) \norm{h_2}_{C^1(\Omega)}
    \int_{\Omega} \abs{\Psi_\Eps (s, x, v)} \dv\dx\ds \nn
\\
& \leq
   \Eps C_2(\Omega) \norm{h_2}_{C^1(\Omega)}
   \norm{\Psi_\Eps}_{L^\infty(0, \infty; L^2(\R^n \times \Ss^{n-1}))}
\to 0 
\qquad \text{as $\Eps \to 0$} \,.
\end{align}
Combining~\eqref{limit:I-1-Eps} and~\eqref{limit:I-2-Eps}, we have
\begin{align} \label{limit:LHS}
    \text{LHS of~\eqref{def:soln-1}} 
\longrightarrow
   \Psi_0 \quad \text{in $\CalD'$.}
\end{align}
The limiting equation~\eqref{eq:Psi-0} hence follows from~\eqref{limit:RHS} and~\eqref{limit:LHS}. 
\end{proof}

Next, we study the convergence of averages of $\Psi_\Eps$. To this end, we apply the Fourier transform in $x$ to \eqref{def:soln} for $a.e.$ $s, v$. This gives
\begin{align} \label{F-1}
   \hat\Psi_\Eps(s, \xi, v')
   = \vpran{\int_0^\infty \!\!\!\! \int_{\Ss^{n-1}} 
         \vpran{\sigma(v' \cdot \bar v) - \theta(\Eps) (1-c)} p(\tau)
         \hat\Psi_\Eps(\tau, \xi, \bar v) {\rm d}\bar v\dtau} e^{-i \Eps v' \cdot \xi s}
       + \theta(\Eps) \hat Q(\xi, v') e^{-i \Eps v' \cdot \xi s}\,,
\end{align}
where $\xi$ is the Fourier variable and $\hat u$ denotes the Fourier transform in $x$ of $u$. The free velocity variable is changed from $v$ to $v'$ for later notational convenience. Note that switching the order of integration on the right-hand side of \eqref{def:soln} when applying the Fourier transform is valid. Indeed, denote
\begin{align*}
   w_1(v') 
&  = \int_{\R^n} \vpran{\int_0^\infty\int_{\Ss^{n-1}} 
         \sigma(v' \cdot \bar v) p(\tau)
         \Psi_\Eps(\tau, x, \bar v) {\rm d} \bar v\dtau} e^{-i x \cdot \xi} \dx \,,
\\
   w_2(v')
& = \int_0^\infty\int_{\Ss^{n-1}}
         \sigma(v' \cdot \bar v) p(\tau)
         \vpran{\int_{\R^n}\Psi_\Eps(\tau, x, \bar v) e^{-i x \cdot \xi} \dx} {\rm d} \bar v\dtau \,.
\end{align*}
By \eqref{bound:CS-1} with $s=0$, we have $w_1, w_2 \in L^1(\Ss^{n-1})$. Moreover,  for any function $\phi_1 \in L^\infty(\Ss^{n-1})$, 
\begin{align*}
& \quad \,
  \int_{\Ss^{n-1}}\int_{\R^n} \int_0^\infty\int_{\Ss^{n-1}} 
         \sigma(v' \cdot \bar v) p(\tau)
         \abs{\Psi_\Eps(\tau, x, \bar v) \phi_1(v')} \dv'\dtau \dx\dvbar 
\\
&\leq 
   \norm{\phi_1}_{L^\infty(\Ss^{n-1})}
   \norm{\Psi_\Eps}_{L^\infty((0, \infty); L^1(\R^n \times \Ss^{n-1}))}
  < \infty \,.
\end{align*}
Hence, by Fubini's theorem, it holds that
\begin{align*}
    \int_{\Ss^{n-1}} w_1(v') \phi_1(v') \dv'
  = \int_{\Ss^{n-1}} w_2(v') \phi_1(v') \dv'
\end{align*}
for any $\phi_1 \in L^\infty(\Ss^{n-1})$. This shows $w_1 = w_2$ and \eqref{F-1} is valid.

Hinted by \eqref{F-1}, we consider the following averaged quantity of $\hat\Psi_\Eps$:
\begin{align} \label{def:phi-Eps}
   \hat\phi_\Eps(\xi, v)
   =  \int_0^\infty \int_{\Ss^{n-1}}
          \sigma(v \cdot \bar v) p(s) \hat \Psi_\Eps(s, \xi, \bar v) \dvbar\ds \,.
\end{align}
We will first show that the limit of the velocity average of $\hat\phi_\Eps$ satisfies a diffusion equation. The equation for $\hat\phi_\Eps$ is derived by multiplying $\sigma(v \cdot v') p(s)$ to equation~\eqref{F-1} and integrating in $s, v'$. It has the form
\begin{align} \label{eq:phi-Eps}
   \hat\phi_\Eps(\xi, v)
   = & \int_0^\infty \int_{\Ss^{n-1}} 
          \sigma(v \cdot v') p(s) \hat\phi_\Eps(\xi, v') 
          e^{-i \Eps v' \cdot \xi s} \dv'\ds \nn
\\
    & - \theta(\Eps) (1-c) \int_0^\infty \int_{\Ss^{n-1}}           
          \int_0^\infty \int_{\Ss^{n-1}}  
             \sigma(v \cdot v') p(s) p(\tau) \hat \Psi_\Eps(\tau, \xi, \bar v)  
          e^{-i \Eps v' \cdot \xi s} \dvbar \dtau\dv'\ds
\\
     &  + \theta(\Eps)
            \int_0^\infty \int_{\Ss^{n-1}} 
          \sigma(v \cdot v') p(s) \hat Q(\xi, v') 
          e^{-i \Eps v' \cdot \xi s} \dv'\ds \,. \nn
\end{align}
For the ease of notation, we introduce the operator $\CalK_\Eps$ and the terms $\CalA_\Eps \hat\Psi_\Eps, \hat q_\Eps$ as
\begin{align} \label{def:K-Eps}
   \CalK_\Eps \hat u 
&    =  \int_0^\infty \int_{\Ss^{n-1}} 
          \sigma(v \cdot v') p(s) \hat u(\xi, v') 
          e^{-i \Eps v' \cdot \xi s} \dv'\ds \,,
\qquad
   \hat q_\Eps = \CalK_\Eps \hat Q \,.
\\
   \CalA_\Eps \hat\Psi_\Eps
&   = \int_0^\infty \int_{\Ss^{n-1}}           
          \int_0^\infty \int_{\Ss^{n-1}}  
             \sigma(v \cdot v') p(s) p(\tau) \hat\Psi_\Eps(\tau, \xi, \bar v)
           e^{-i \Eps v' \cdot \xi s} \dvbar \dtau\dv'\ds \,.
\end{align}
Then \eqref{eq:phi-Eps} becomes
\begin{align} \label{eq:phi-Eps-1}
    \hat\phi_\Eps 
    = \CalK_\Eps \hat\phi_\Eps  
       - \theta(\Eps) (1-c) \CalA_\Eps \hat\Psi_\Eps
       + \theta(\Eps) \hat q_\Eps \,.
\end{align}
We can further write \eqref{eq:phi-Eps-1} as
\begin{align} \label{eq:phi-Eps-2}
   \frac{1}{\theta(\Eps)} (\hat\phi_\Eps - \CalK_\Eps \hat\phi_\Eps)
   + (1-c) \CalA_\Eps \hat\Psi_\Eps 
   = \hat q_\Eps \,.
\end{align}
In order to pass to the limit in $\Eps$ in equation~\eqref{eq:phi-Eps-2}, we need uniform bounds on $\CalK_\Eps \hat\phi_\Eps$, $\CalA_\Eps \hat\phi_\Eps$, and $\hat q_\Eps$. This is stated in the following lemma:
\begin{lem} \label{lem:phi-Eps}
Suppose $\sigma, c, p$ satisfy the conditions in Theorem~\ref{thm:well-posedness}. Suppose $Q \in  L^1\cap L^2(\R^n \times \Ss^{n-1})$ and $\hat\Psi_\Eps \in L^\infty(0, \infty; L^1 \cap L^2(\R^n \times \Ss^{n-1}))$ is the solution to equation~\eqref{def:soln}. Then $\hat\phi_\Eps \in L^2(\R^n \times \Ss^{n-1})$. Moreover,  there exists a constant $C_1>0$ which only depends on $Q, c$ such that
\begin{align*}
    \norm{\hat\phi_\Eps}_{L^2(\R^n \times \Ss^{n-1})} \leq C_1 \,,
\quad
    \norm{\CalK_\Eps \hat\phi_\Eps}_{L^2(\R^n \times \Ss^{n-1})} \leq C_1  \,,
\quad
    \norm{\CalA_\Eps \hat\Psi_\Eps}_{L^2(\R^n \times \Ss^{n-1})} \leq C_1  \,,
\quad
     \norm{\hat q_\Eps}_{L^2(\R^n \times \Ss^{n-1})} \leq C_1 \,.
\end{align*}
\end{lem}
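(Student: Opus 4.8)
The plan is to reduce all four estimates to a single mechanism. Plancherel's theorem transfers the uniform-in-$\Eps$ bound $\norm{\Psi_\Eps(s, \cdot, \cdot)}_{L^2(\R^n \times \Ss^{n-1})} \leq \frac{1}{1-c} \norm{Q}_{L^2}$ from Theorem~\ref{thm:well-posedness} (valid for a.e.\ $s$, by the $L^\infty$-in-$s$ bound) into the same bound for $\hat\Psi_\Eps$ in the Fourier variable $\xi$, up to a fixed dimensional constant. The remaining work is a Cauchy--Schwarz (Jensen) estimate against the probability measure $\sigma(v \cdot v') p(s) \dv' \ds$, whose total mass is one because $\int_{\Ss^{n-1}} \sigma \dv' = 1$ and $\int_0^\infty p \ds = 1$. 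The oscillatory factor $e^{-i \Eps v' \cdot \xi s}$ has modulus one everywhere, so it never enters these estimates; this is exactly what makes every bound uniform in $\Eps$ (and in $\xi$).

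For $\hat\phi_\Eps$ I would first apply Cauchy--Schwarz against the probability measure $\sigma(v \cdot \bar v) p(s) \dvbar \ds$ to obtain the pointwise bound
\[
  \abs{\hat\phi_\Eps(\xi, v)}^2
  \leq \int_0^\infty \int_{\Ss^{n-1}} \sigma(v \cdot \bar v) p(s) \abs{\hat\Psi_\Eps(s, \xi, \bar v)}^2 \dvbar \ds \,.
\]
Integrating in $v$ and using $\int_{\Ss^{n-1}} \sigma(v \cdot \bar v) \dv = 1$ removes the $\sigma$-weight; then integrating in $\xi$, invoking Plancherel, and using $\int_0^\infty p \ds = 1$ together give $\norm{\hat\phi_\Eps}_{L^2} \leq C_1$. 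In particular $\hat\phi_\Eps \in L^2(\R^n \times \Ss^{n-1})$. The identical computation, now keeping the unit-modulus phase $e^{-i \Eps v' \cdot \xi s}$ in place, bounds $\CalK_\Eps \hat\phi_\Eps$ by $\norm{\hat\phi_\Eps}_{L^2}$ and $\hat q_\Eps = \CalK_\Eps \hat Q$ by $\norm{Q}_{L^2}$, since $\CalK_\Eps$ and $\hat q_\Eps$ have exactly the same structure.

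The term $\CalA_\Eps \hat\Psi_\Eps$ is the only one with a double integral, but it factors cleanly because the inner integrand $\hat\Psi_\Eps(\tau, \xi, \bar v)$ depends on neither $s$ nor $v'$. I would write $\CalA_\Eps \hat\Psi_\Eps(\xi, v) = M_\Eps(\xi, v)\, G(\xi)$ with
\[
  M_\Eps(\xi, v) = \int_0^\infty \int_{\Ss^{n-1}} \sigma(v \cdot v') p(s) e^{-i \Eps v' \cdot \xi s} \dv' \ds \,,
  \qquad
  G(\xi) = \int_0^\infty \int_{\Ss^{n-1}} p(\tau) \hat\Psi_\Eps(\tau, \xi, \bar v) \dvbar \dtau \,.
\]
Since $\abs{M_\Eps} \leq 1$ and $\dv$ has unit mass on $\Ss^{n-1}$, integrating $\abs{M_\Eps \, G}^2$ in $(v, \xi)$ gives $\norm{\CalA_\Eps \hat\Psi_\Eps}_{L^2} \leq \norm{G}_{L^2(\R^n)}$; applying Cauchy--Schwarz to $G$ against the probability measure $p(\tau) \dvbar \dtau$ and then Plancherel bounds $\norm{G}_{L^2}$ by $C_1$ exactly as before. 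Enlarging $C_1$ to absorb the various constants (including the Plancherel normalization, which depends only on $n$) yields the stated uniform bounds with $C_1$ depending only on $Q$ and $c$.

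I expect no essential obstacle here: the argument is structural rather than delicate. The only point requiring care is bookkeeping, namely justifying the Fubini interchanges that move the $\sigma$- and $p$-weights past $\abs{\hat\Psi_\Eps}^2$ and past the $\xi$-integration. These are licensed by the joint $L^1 \cap L^2$ bound already recorded for $\Psi_\Eps$, which through Plancherel gives a square-integrability control on $\hat\Psi_\Eps$ that is uniform in $s$, so all the integrands are absolutely integrable and the interchanges are valid.
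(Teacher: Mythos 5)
Your argument is correct and is essentially the paper's own proof: the authors simply cite Lemma~\ref{lem:CS} (the Cauchy--Schwarz/Jensen estimate against the unit-mass measure $\sigma(v\cdot\bar v)p(s)\dvbar\ds$, with the unimodular phase discarded) together with Parseval's identity, and your write-up just fills in those details, including the clean factorization of $\CalA_\Eps\hat\Psi_\Eps$. No gaps.
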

\begin{proof}
These are all direct consequences of the inequality~\eqref{bound:CS-1} in Lemma~\ref{lem:CS} and Parseval's identity.
\end{proof}

We can now derive the limit of $\hat\phi_\Eps$ along a subsequence. 
\begin{lem} \label{lem:converg-phi}
Suppose $Q \in  L^1\cap L^2(\R^n \times \Ss^{n-1})$ and $\hat\Psi_\Eps \in L^\infty(0, \infty; L^1 \cap L^2(\R^n \times \Ss^{n-1}))$ is the solution to equation~\eqref{def:soln}. Let $\Psi_0$ be the limit defined in Theorem~\ref{thm:Psi}.
Then there exists a subsequence $\hat\phi_{\Eps_k}$ such that $\hat\phi_{\Eps_k} \to \hat\Psi_0$ weakly in $L^2(\R^n \times \Ss^{n-1})$. As a consequence, $\vint{\hat\phi_{\Eps_k}} \to \hat \Psi_0$ weakly in $L^2(\R^n)$.
\end{lem}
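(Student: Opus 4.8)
The plan is to deduce the statement directly from the definition of $\hat\phi_\Eps$ in \eqref{def:phi-Eps} together with the weak-$\ast$ convergence of $\Psi_{\Eps_k}$ furnished by Theorem~\ref{thm:Psi}. Since Lemma~\ref{lem:phi-Eps} guarantees that $\{\hat\phi_{\Eps_k}\}$ is bounded in the Hilbert space $L^2(\R^n\times\Ss^{n-1})$, it suffices to verify that the pairing $\int_{\R^n}\int_{\Ss^{n-1}}\hat\phi_{\Eps_k}\,\overline{g}\,\dv\dxi$ converges to $\int_{\R^n}\int_{\Ss^{n-1}}\hat\Psi_0\,\overline{g}\,\dv\dxi$ for every $g\in L^2(\R^n\times\Ss^{n-1})$; this simultaneously establishes weak convergence and identifies the limit as $\hat\Psi_0$, so that no further subsequence is needed. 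The crucial preliminary is to transfer the convergence of Theorem~\ref{thm:Psi} from physical space to Fourier space.

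First I would carry out this transfer. For any $\zeta\in L^1(0,\infty; L^2(\R^n\times\Ss^{n-1}))$, Plancherel's identity in $x$ gives $\int_0^\infty\langle\hat\Psi_{\Eps_k}(s),\zeta(s)\rangle\ds = \int_0^\infty\langle\Psi_{\Eps_k}(s),\check\zeta(s)\rangle\ds$, where $\check\zeta$ is the inverse Fourier transform in $x$; because Plancherel preserves the $L^2$ norm, $\check\zeta$ again lies in $L^1(0,\infty; L^2)$, so the right-hand side converges by Theorem~\ref{thm:Psi}. Hence $\hat\Psi_{\Eps_k}\to\hat\Psi_0$ in $w^\ast$-$L^\infty(0,\infty; L^2(\R^n\times\Ss^{n-1}))$. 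Next, with $G(\xi,\bar v) = \int_{\Ss^{n-1}}\sigma(v\cdot\bar v)\,g(\xi,v)\,\dv$ and after interchanging the order of integration (Fubini, justified exactly as in the derivation of \eqref{F-1}), I rewrite
\begin{align*}
\int_{\R^n}\int_{\Ss^{n-1}} \hat\phi_{\Eps_k}(\xi,v)\,\overline{g(\xi,v)}\,\dv\dxi
= \int_0^\infty \int_{\R^n}\int_{\Ss^{n-1}}
\hat\Psi_{\Eps_k}(s,\xi,\bar v)\,p(s)\,\overline{G(\xi,\bar v)}\,\dvbar\dxi\ds.
\end{align*}
The velocity-scattering operator $g\mapsto G$ is bounded on $L^2(\Ss^{n-1})$ by the normalization of $\sigma$ and Cauchy-Schwarz, so $G\in L^2(\R^n\times\Ss^{n-1})$; and since $\int_0^\infty p(s)\ds = 1$, the function $p(s)G(\xi,\bar v)$ belongs to $L^1(0,\infty; L^2(\R^n\times\Ss^{n-1}))$ and is therefore an admissible test function for the weak-$\ast$ topology.

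Passing to the limit in the displayed identity and using that $\hat\Psi_0=\hat\Psi_0(\xi)$ is independent of $s$ and $\bar v$, the $s$-integral produces $\int_0^\infty p(s)\ds = 1$ while the $\bar v$-integral produces the normalization $\int_{\Ss^{n-1}}\sigma(v\cdot\bar v)\,\dvbar = 1$ (valid since $\sigma$ depends only on $v\cdot\bar v$), which collapses the right-hand side to $\int_{\R^n}\int_{\Ss^{n-1}}\hat\Psi_0(\xi)\,\overline{g(\xi,v)}\,\dv\dxi$. This proves $\hat\phi_{\Eps_k}\to\hat\Psi_0$ weakly in $L^2(\R^n\times\Ss^{n-1})$. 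For the final assertion, given $h\in L^2(\R^n)$ I view it as an element of $L^2(\R^n\times\Ss^{n-1})$ independent of $v$ (admissible because $\Ss^{n-1}$ carries unit measure), so that $\int_{\R^n}\vint{\hat\phi_{\Eps_k}}\,\overline{h}\,\dxi = \int_{\R^n}\int_{\Ss^{n-1}}\hat\phi_{\Eps_k}\,\overline{h}\,\dv\dxi \to \int_{\R^n}\hat\Psi_0\,\overline{h}\,\dxi$, giving $\vint{\hat\phi_{\Eps_k}}\to\hat\Psi_0$ weakly in $L^2(\R^n)$. The main obstacle is the bookkeeping in the middle step: transferring the weak-$\ast$ convergence across the Fourier transform via Plancherel, and checking that $p(s)G$ is a genuine $L^1(0,\infty; L^2)$ test function so that the interchange of limit and integration is legitimate; once these are in place, the normalizations of $p$ and $\sigma$ reduce everything to the desired identity.
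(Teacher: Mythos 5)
Your proposal is correct and follows essentially the same route as the paper: the authors also deduce the lemma from the uniform $L^2$ bound of $\hat\phi_\Eps$ together with the duality/Fubini argument of \eqref{converg:h-1} applied to $\hat\Psi_\Eps$, pairing against a test function whose $\sigma$-average lies in $L^1(0,\infty;L^2)$ and then invoking the normalizations of $p$ and $\sigma$. Your explicit Plancherel step transferring the weak-$\ast$ convergence of $\Psi_{\Eps_k}$ to $\hat\Psi_{\Eps_k}$ is exactly the detail the paper leaves implicit, and your observation that the limit is identified along the whole subsequence $\Eps_k$ is a harmless sharpening.
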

\begin{proof}
The proof follows from the uniform bounded of $\hat\phi_\Eps$ and a similar argument as in~\eqref{converg:h-1} (by replacing $\Psi_\Eps$ in ~\eqref{converg:h-1} with $\hat\Psi_\Eps$).
\end{proof}

Define the operators $\CalL_\Eps, \CalK, \CalL$ as
\begin{align} \label{def:K-L}
    \CalK \hat u= \int_{\Ss^{n-1}} \sigma(v \cdot v') \hat u(v') \dv' \,,
\qquad
   \CalL_\Eps = \ii - \CalK_\Eps \,,
\qquad
   \CalL = \ii - \CalK \,.
\end{align}
where $\ii$ is the identity operator. 
Then \eqref{eq:phi-Eps-1} can be further reformulated as 
\begin{align} \label{eq:phi-Eps-4}
    \frac{1}{\theta(\Eps)}\CalL \hat\phi_\Eps 
    - \int_0^\infty \int_{\Ss^{n-1}}
          \sigma(v \cdot v') p(s) \hat\phi_\Eps(\xi, v') 
          \frac{e^{-i \Eps v' \cdot \xi s} - 1}{\theta(\Eps)}
      \dv'\ds
 = - (1-c) \CalA_\Eps \hat\Psi_\Eps
    + \hat q_\Eps \,. 
\end{align}

We summarize some properties of $\CalL$ in the following lemma:
\begin{lem} \label{lem:CalL}
Let $\CalL, \CalK$ be defined as in~\eqref{def:K-L}. Then
\begin{enumerate}
\item $\CalK: L^2(\Ss^{n-1}) \to L^2(\Ss^{n-1})$ is compact and $\CalL$ is Fredholm. 

\item $\NullL = \Span\{1\}$.

\item Fix $\xi \in \R^n$. Then $v \cdot \xi \in (\NullL)^\perp$ is an eigenfunction of $\CalL$ with eigenvalue $1-\bar\mu_0$. Here, 
$$
\bar\mu_0 =  \frac{1}{2} \int_{-1}^1 \mu \sigma(\mu) \dmu < 1
$$ 
is the average scattering cosine.
\end{enumerate}
\end{lem}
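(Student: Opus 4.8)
The plan is to prove the three assertions of Lemma~\ref{lem:CalL} in the stated order, since each depends on the previous one.

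For part (1), I would first establish that $\CalK: L^2(\Ss^{n-1}) \to L^2(\Ss^{n-1})$ is a Hilbert--Schmidt operator. Its integral kernel is $\sigma(v \cdot v')$, and since $\sigma$ is bounded (it is normalized by $\int_{\Ss^{n-1}} \sigma(v \cdot v') \dv = 1$ and bounded below, and one expects it bounded above) and $\Ss^{n-1}$ has finite measure, the kernel lies in $L^2(\Ss^{n-1} \times \Ss^{n-1})$. Hence $\CalK$ is compact. Then $\CalL = \ii - \CalK$ is a compact perturbation of the identity, so it is Fredholm of index zero by the standard Riesz--Schauder theory.

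For part (2), I would show $\NullL = \Span\{1\}$ in two inclusions. That $1 \in \NullL$ is immediate from the normalization $\int_{\Ss^{n-1}} \sigma(v \cdot v') \dv' = 1$, which gives $\CalK 1 = 1$. For the reverse inclusion, suppose $\CalL u = 0$, i.e.\ $u(v) = \int_{\Ss^{n-1}} \sigma(v \cdot v') u(v') \dv'$. I would exploit that $\sigma(v \cdot v') \geq \sigma_0 > 0$ together with the normalization: the map $u \mapsto \CalK u$ is an averaging operator against a strictly positive probability kernel, so a maximum-principle / strict-positivity argument shows any fixed point must be constant. Concretely, evaluating at a point where $|u|$ attains (an essential) maximum and using the strict positivity of the kernel forces $u$ to equal its average everywhere, hence $u$ is constant. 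This is the step I expect to require the most care, since it must be phrased correctly in the $L^2$ setting rather than pointwise; I would either pass through the spectral-gap estimate $\|\CalK u - \vint{\CalK u}\|_{L^2} \leq (1-\delta)\|u - \vint{u}\|_{L^2}$ for some $\delta > 0$ coming from $\sigma_0$, or invoke a Krein--Rutman / irreducibility argument for the positive kernel.

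For part (3), fix $\xi \in \R^n$ and consider the function $v \mapsto v \cdot \xi$, which is odd and hence orthogonal to constants, placing it in $(\NullL)^\perp$. I would compute $\CalK(v \cdot \xi) = \int_{\Ss^{n-1}} \sigma(v \cdot v')\, (v' \cdot \xi) \dv'$ directly. Using the rotational structure of the kernel (it depends only on $v \cdot v'$), I would decompose $v' \cdot \xi$ into its component along $v$ and an orthogonal remainder; the orthogonal part integrates to zero by symmetry, and the component along $v$ produces the factor $\bar\mu_0 = \frac{1}{2}\int_{-1}^1 \mu\, \sigma(\mu) \dmu$ times $v \cdot \xi$. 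This yields $\CalK(v\cdot\xi) = \bar\mu_0\,(v \cdot \xi)$, so $\CalL(v \cdot \xi) = (1 - \bar\mu_0)(v \cdot \xi)$, identifying the eigenvalue. The bound $\bar\mu_0 < 1$ follows from the normalization together with $|\mu| \leq 1$ and the strict positivity ensuring the inequality is strict.
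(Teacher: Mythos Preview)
Your proposal is correct and, for part (3), matches the paper's proof essentially line-for-line: the paper also computes $\CalK(v\cdot\xi)$ directly via the symmetry argument you describe, obtaining $\CalK(v\cdot\xi)=\bar\mu_0\,(v\cdot\xi)$.

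For parts (1) and (2) the paper does not give any argument at all; it simply cites these as classical results in transport theory (referencing \cite{BarSanSen84}). Your outline therefore supplies more than the paper does. One small caveat on your Hilbert--Schmidt step: the paper's standing assumptions on $\sigma$ are only $\sigma\geq\sigma_0>0$ and $\int_{\Ss^{n-1}}\sigma(v\cdot v')\dv'=1$, which do not by themselves force $\sigma\in L^\infty$ or $\sigma\in L^2$; so the kernel may not be Hilbert--Schmidt on those hypotheses alone. Compactness of $\CalK$ is still true via the Funk--Hecke expansion (the operator is diagonal on spherical harmonics with eigenvalues tending to zero), which is what the cited classical reference uses. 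Your arguments for $\NullL=\Span\{1\}$ via strict positivity / spectral gap are the standard ones and are fine.
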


\begin{proof}
Part (1) and (2) are classical results regarding transport equations \cite{BarSanSen84}. Part (3) follows from direct calculation. Indeed, 
by a symmetry argument
\begin{align}
  \CalK (v \cdot \xi)
= \int_{\Ss^{n-1}} \sigma(v \cdot v') (v' \cdot \xi) \dv'
= \vpran{\int_{\Ss^{n-1}} \sigma(v \cdot v') (v' \cdot v) \dv'} (v \cdot \xi)
= \vpran{\frac{1}{2} \int_{-1}^1 \mu \sigma(\mu) \dmu} (v \cdot \xi) \,.
\end{align}
Therefore $v \cdot \xi$ is an eigenfunction of $\CalL$ with the associated eigenvalue $1 - \bar\mu_0$.
\end{proof}


In the rest of the section we will find the macroscopic equation that $\hat \Psi_0$ satisfies by passing to the limit in \eqref{eq:phi-Eps-4}. 
The main result builds upon the key estimates summarized in the following proposition:
\begin{prop} \label{prop:Lambda-Eps}
Suppose $\kappa(\mu) \geq 0$ and satisfies that $\frac{1}{2}\int_{-1}^1 \kappa(\mu) \dmu = 1$.
Suppose $p$ satisfies  that
\begin{align} \label{assump:p}
   \int_0^\infty p(s) \ds = 1, \qquad \int_0^\infty s p(s) \ds < \infty \,,
\qquad p(s) = \frac{d_0}{s^{\alpha + 1}} 
\quad 
  \text{for $s > 1$,}
\end{align}
where $d_0 > 0$ is a constant. 
For any given $v' \in \Ss^{n-1}$ and $\xi \in \R^n$, define $\Lambda_\Eps$ as
\begin{align*}
    \Lambda_\Eps(\xi, v') = \int_{\Ss^{n-1}} \int_0^\infty \kappa(v' \cdot v)  p(s) \frac{\cos(\Eps v \cdot \xi s) - 1}{\theta(\Eps)} \ds\dv \,.
\end{align*}
Then there exists a generic constant $c_0 > 0$ which is independent of $\Eps, v', \xi$ such that 
\begin{itemize}
\item[(a1)] if $\alpha > 2$ and we choose $\theta(\Eps) = \Eps^2$, then $ |\Lambda_\Eps| \leq c_0 |\xi|^2$;
\item[(b1)] if $1 < \alpha < 2$ and we choose $\theta(\Eps) = \Eps^\alpha$, then $ |\Lambda_\Eps| \leq c_0 |\xi|^\alpha$;
\item[(c1)] if $\alpha = 2$ and we choose $\theta(\Eps) = -\Eps^2 \ln \Eps$, then $ |\Lambda_\Eps| \leq c_0 |\xi|^2$.
\end{itemize}
Moreover, there exist $\tilde D_1, \tilde D_2, \tilde D_3> 0$ (that may depend on $\xi, v'$) such that
\begin{itemize}
\item[(a2)] if $\alpha > 2$ and we choose $\theta(\Eps) = \Eps^2$, then $\lim_{\Eps \to 0} \Lambda_\Eps = - \tilde D_1 |\xi|^2$;
\item[(b2)] if $1 < \alpha < 2$ and we choose $\theta(\Eps) = \Eps^\alpha$, then $\lim_{\Eps \to 0} \Lambda_\Eps = - \tilde D_2 |\xi|^\alpha$;
\item[(c2)] if $\alpha = 2$ and we choose $\theta(\Eps) = -\Eps^2 \ln \Eps$, then $\lim_{\Eps \to 0} \Lambda_\Eps = - \tilde D_3 |\xi|^2$.
\end{itemize}
\noindent Here the convergence statements are to be understood pointwise in $\xi, v'$. In the special case where $\kappa(v' \cdot v)$ is a constant, the coefficients $\tilde D_1, \tilde D_2, \tilde D_3$ are all constants that are independent of $\xi, v'$.
\end{prop}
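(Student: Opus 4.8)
The plan is to reduce everything to the scalar radial integral
\begin{align*}
   g_\Eps(a) = \int_0^\infty p(s) \frac{\cos(\Eps a s) - 1}{\theta(\Eps)} \ds \,,
\qquad a = v \cdot \xi \,,
\end{align*}
so that $\Lambda_\Eps(\xi, v') = \int_{\Ss^{n-1}} \kappa(v' \cdot v)\, g_\Eps(v \cdot \xi) \dv$. The guiding idea is to separate the finite part of $p$ from its heavy tail by writing $\int_0^\infty = \int_0^1 + \int_1^\infty$ and inserting $p(s) = d_0 s^{-(\alpha+1)}$ on $[1, \infty)$. On the tail the natural move is the rescaling $r = \Eps |a| s$, which turns $\int_1^\infty s^{-(\alpha+1)}(\cos(\Eps a s) - 1)\ds$ into $\Eps^\alpha |a|^\alpha \int_{\Eps|a|}^\infty r^{-(\alpha+1)}(\cos r - 1)\, {\rm d} r$, exposing the exact power of $\Eps$ that $\theta(\Eps)$ must cancel. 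Throughout I will use the elementary bounds $\abs{\cos x - 1} \leq \min\{2, x^2/2\}$ together with the interpolated form $\abs{\cos x - 1} \leq 2\abs{x}^\beta$ valid for every $0 < \beta \leq 2$.

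For the upper bounds I treat the three regimes separately. When $\alpha > 2$ the second moment $M_2 = \int_0^\infty s^2 p(s)\ds$ is finite (since $s^2 p(s) \sim d_0 s^{1-\alpha}$ is integrable at infinity), so with $\theta(\Eps) = \Eps^2$ the bound $\abs{\cos x - 1}\leq x^2/2$ gives $\abs{g_\Eps(a)} \leq \tfrac12 a^2 M_2$ and hence $\abs{\Lambda_\Eps} \leq c_0 \abs{\xi}^2$. When $1 < \alpha < 2$ and $\theta(\Eps)=\Eps^\alpha$, the rescaled tail is dominated by $d_0 |a|^\alpha \int_0^\infty r^{-(\alpha+1)}\abs{\cos r - 1}\, {\rm d} r$, and this integral converges precisely because $\abs{\cos r - 1}\sim r^2/2$ near $0$ (so the integrand behaves like $r^{1-\alpha}$, integrable as $1 - \alpha > -1$) and is $O(r^{-(\alpha+1)})$ at infinity; on $[0,1]$ the bound $\abs{\cos x - 1}\leq 2\abs{x}^\alpha$ gives $2|a|^\alpha \int_0^1 s^\alpha p(s)\ds \leq 2|a|^\alpha$. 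Together these yield $\abs{\Lambda_\Eps}\leq c_0\abs{\xi}^\alpha$.

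The pointwise limits follow from the same decomposition together with dominated convergence. For $\alpha > 2$, Taylor expansion gives $g_\Eps(a) \to -\tfrac12 a^2 M_2$; for $1 < \alpha < 2$, the lower endpoint $\Eps|a| \to 0$ so the rescaled tail converges to $d_0 |a|^\alpha \int_0^\infty r^{-(\alpha+1)}(\cos r - 1)\, {\rm d} r$ (a negative constant times $|a|^\alpha$), while the $[0,1]$ piece is $O(\Eps^{2-\alpha}) \to 0$. In either case I substitute $a = v\cdot\xi = \abs{\xi}\,(v\cdot\hat\xi)$ and integrate over $\Ss^{n-1}$: homogeneity pulls out $\abs{\xi}^2$, respectively $\abs{\xi}^\alpha$, and the remaining angular integral $\int_{\Ss^{n-1}}\kappa(v'\cdot v)\abs{v\cdot\hat\xi}^\beta \dv$ defines $\tilde D_1, \tilde D_2$; when $\kappa$ is constant this integral is rotation invariant and independent of $\hat\xi, v'$, giving the stated constancy.

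The delicate case, and the main obstacle, is $\alpha = 2$, where $M_2$ diverges only logarithmically and neither the pure Taylor bound nor the scaling integral $\int_0^\infty r^{-3}(\cos r - 1)\,{\rm d} r$ (which diverges at $r = 0$) is directly usable. Here the correct bookkeeping is to split the tail at the cutoff scale $s = 1/(\Eps|a|)$: below it one uses $\abs{\cos x - 1}\leq x^2/2$, producing $\tfrac{d_0}{2}\Eps^2 a^2 \ln\tfrac{1}{\Eps|a|}$, and above it one uses $\abs{\cos x - 1}\leq 2$, producing an $O(\Eps^2 a^2)$ remainder. Dividing by $\theta(\Eps) = -\Eps^2\ln\Eps$ leaves $-\tfrac{d_0}{2}a^2 \cdot \tfrac{\ln(1/\Eps) + \ln(1/|a|)}{\ln(1/\Eps)}$, whose ratio tends to $1$ as $\Eps \to 0$ for fixed $a$, giving $g_\Eps(a) \to -\tfrac{d_0}{2}a^2$ and hence $\Lambda_\Eps \to -\tilde D_3\abs{\xi}^2$; the logarithmic factor is exactly what the non-standard normalization $-\Eps^2\ln\Eps$ is designed to absorb. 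Controlling this ratio carefully, so that the $\ln(1/|a|)$ term does not spoil the estimate, is the only genuinely subtle point, and it is handled by the very cutoff split that produces the limit.
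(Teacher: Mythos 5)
Your proposal is correct, and for the cases $\alpha>2$ and $1<\alpha<2$ it follows essentially the same route as the paper: the finite second moment plus dominated convergence when $\alpha>2$, and the split at $s=1$ followed by the rescaling $r=\Eps\abs{a}s$ on the tail when $1<\alpha<2$, with the $[0,1]$ piece controlled by the factor $\Eps^{2-\alpha}$. The genuine difference is in the borderline case $\alpha=2$. The paper first integrates out $v$ to form $h(\tau)=\int_{\Ss^{n-1}}\kappa(v\cdot v')\sin^2\vpran{\tau (v\cdot \xi/\abs{\xi})/2}\dv$, reduces the problem to the one-dimensional quantity $\frac{1}{\ln(1/\Eps)}\int_{\Eps\abs{\xi}}^{1} h(\tau)\tau^{-3}\dtau$, and evaluates the limit by L'H\^opital's rule; you instead split the tail at the dynamic cutoff $s=1/(\Eps\abs{a})$ and extract the logarithm by Taylor expansion below the cutoff. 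Your version is more elementary (no L'H\^opital) and makes it transparent why $\theta(\Eps)=-\Eps^2\ln\Eps$ is the right normalization; it also retains the factor $d_0$ in $\tilde D_3$, which the paper loses in a typo when it splits $\Lambda_{\Eps,3}$. One step to write out carefully in your case (c): the intermediate bound $\tfrac{d_0}{2}a^2\,\bigl(\ln(1/\Eps)+\ln(1/\abs{a})\bigr)/\ln(1/\Eps)$ has a ratio that is not uniformly bounded as $a=v\cdot\xi\to 0$, so to reach the stated $c_0\abs{\xi}^2$ you should add the observation that, writing $a=\abs{\xi}(v\cdot\xi/\abs{\xi})$, one has $a^2\ln(1/\abs{v\cdot\xi/\abs{\xi}})\le \abs{\xi}^2/(2e)$, leaving only a term $\abs{\xi}^2\ln(1/\abs{\xi})/\ln(1/\Eps)$; the paper's own estimate of $\Lambda_{\Eps,3,1}$ has exactly the same feature, and it is harmless for the subsequent weak-convergence argument, so this is a shared loose end rather than a gap in your proof.
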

\begin{proof}
We will show these estimates case by case. 

\noindent (a) Suppose $\alpha > 2$ and let $\theta(\Eps) = \Eps^2$. This is the small-tail case where 
\begin{align} \label{cond:p-1}
    D_0 \Denote \int_0^\infty p(s) s^2 \ds < \infty \,.
\end{align}
In this case, for each fixed $v' \in \Ss^{n-1}$ and $\xi \in \R^n$, the integrand of $\Lambda_\Eps$ satisfies 
\begin{align} \label{bound:integrand-1}
   0 \leq \kappa(v \cdot v') p(s) \frac{\abs{\cos(\Eps v \cdot \xi s) - 1}}{\theta(\Eps)}
\leq
   2|\xi|^2 s^2 p(s) \kappa(v \cdot v') \in L^1((0, \infty) \times \Ss^{n-1})\,.
\end{align}
Therefore, 
\begin{align*}
 |\Lambda_\Eps| 
 = \abs{\int_{\Ss^{n-1}} \int_0^\infty \kappa(v \cdot v') p(s) \frac{\cos(\Eps v \cdot \xi s) -1}{\Eps^2}\ds\dv}
\leq 
   2|\xi|^2 \int_{\Ss^{n-1}} \int_0^\infty
        \kappa(v \cdot v') s^2 p(s) \ds\dv 
= 2 |\xi|^2 D_0 \,.
\end{align*}
In addition, by Lebesgue's Dominated Convergence Theorem, we have
\begin{align} \label{limit:Lambda-Eps-1}
   \lim_{\Eps \to 0} \Lambda_\Eps
&= -2 \int_{\Ss^{n-1}} \int_0^\infty \kappa(v \cdot v') p(s)
          \lim_{\Eps \to 0} \vpran{\frac{\sin^2 \vpran{\frac{\Eps v \cdot \xi s} {2}}}{\theta(\Eps)}} \ds\dv \nn
\\
&= -\frac{|\xi|^2}{2} \vpran{\int_{\Ss^{n-1}} \kappa(v \cdot v') \vpran{v \cdot e_{\xi}}^2 \dv} D_0
= -\tilde D_1 |\xi|^2 \,,
\qquad
   e_\xi = \xi/|\xi| \,,
\end{align}
where $\tilde D_1$ is defined as
\begin{align} \label{def:D-1}
   \tilde D_1(v', e_\xi) = \frac{1}{2} \vpran{\int_{\Ss^{n-1}}\kappa(v \cdot v')  \vpran{v \cdot e_{\xi}}^2 \dv} D_0  \,,
\qquad
   e_\xi = \xi/|\xi| \,.   
\end{align}
By its definition, it is clear that $\tilde D_1$ is independent of $v, \xi$ if $\kappa$ is a constant. 

\noindent{(b)} Now suppose $1 < \alpha < 2$. In this case we take $\theta(\Eps) = \Eps^\alpha$.  Break the integration domain in $\Lambda_\Eps$ into $s >1$ and $s < 1$ and let 
\begin{align*}
   \Lambda_{\Eps,1}
= - 2 \int_{\Ss^{n-1}}\int_{s>1} \kappa\vpran{v \cdot v'}  \, p(s) \frac{\sin^2 \vpran{\frac{\Eps  |\xi| s \vpran{v \cdot e_\xi}} {2}}}{\Eps^\alpha} \ds\dv \,,
\\
   \Lambda_{\Eps,2}
= - 2 \int_{\Ss^{n-1}} \int_{s<1} \kappa\vpran{v \cdot v'}  \, p(s) \frac{\sin^2 \vpran{\frac{\Eps  |\xi| s \vpran{v \cdot e_\xi}} {2}}}{\Eps^\alpha} \ds\dv \,.
\end{align*}
The term $\Lambda_{\Eps, 2}$ is bounded as 
\begin{align} \label{bound:Lambda-2-2}
   \abs{\Lambda_{\Eps,2}}
&\leq
   \frac{|\xi|^2 \, \Eps^{2-\alpha}}{2} \int_{\Ss^{n-1}} \int_{s < 1} \kappa\vpran{v \cdot v'} \, p(s) s^2 \ds\dv \nn
\\
&\leq   \frac{|\xi|^2 \Eps^{2-\alpha}}{2} \int_{\Ss^{n-1}} \int_0^\infty \kappa\vpran{v \cdot v'} \, p(s) \ds\dv
= \frac{1}{2} |\xi|^2 \Eps^{2-\alpha} \,.
\end{align}
Since $1 < \alpha < 2$, we also have 
\begin{align} \label{converg:Lambda-Eps-2-2}
    \Lambda_{\Eps,2} \to 0  
\qquad
   \text{as $\Eps \to 0$ for each fixed $\xi$.}
\end{align}

\Ni To derive the bound of $\Lambda_{\Eps, 1}$, we apply the change of variable $\tau = \Eps |\xi| s$. By the tail behaviour of $p(s)$ in \eqref{assump:p},
\begin{align*}
   \Lambda_{\Eps,1}
= - 2 d_0 |\xi|^\alpha \int_{\Ss^{n-1}} \int_{\tau >\Eps |\xi|} 
     \kappa\vpran{v \cdot v'} \,\frac{\sin^2 \vpran{\frac{\tau \vpran{v \cdot e_\xi}} {2}}}{\tau^{\alpha + 1}}  \dtau\dv
\end{align*}
Since $1 < \alpha < 2$, the integrand in the last term of the above equation satisfies that
\begin{align} \label{def:D-2}
 \tilde   D_2 
&\Denote 
   2 d_0\int_{\Ss^{n-1}} \int_0^\infty \kappa(v \cdot v') \frac{\sin^2 \vpran{\frac{\tau \vpran{v \cdot e_\xi}} {2}}}{\tau^{\alpha + 1}} \dtau\dv \nn
\\
& \leq 
   2 d_0 \int_1^\infty \frac{1}{\tau^{\alpha+1}} \dtau
   + \frac{d_0}{2} \int_0^1 \frac{1}{\tau^{\alpha-1}} \dtau
=  \frac{2d_0}{\alpha} + \frac{d_0}{2(2-\alpha)}
< \infty \,.
\end{align}
Hence, we have
\begin{align} \label{bound:Lambda-Eps-1-2}
   \abs{\Lambda_{\Eps, 1}} 
\leq \tilde D_2 |\xi|^\alpha
\leq \vpran{\frac{2}{\alpha} + \frac{1}{2(2-\alpha)}} |\xi|^\alpha \,,
\end{align}
Furthermore, we can apply Lebesgue's Dominated Convergence theorem and get that for each $\xi \in \R^n$,
\begin{align} \label{converg:Lambda-Eps-1-2}
     \Lambda_{\Eps, 1} \to -\tilde D_2 |\xi|^\alpha 
\end{align}
Combining \eqref{converg:Lambda-Eps-2-2} with \eqref{converg:Lambda-Eps-1-2}, we derive that 
\begin{align*}
    |\Lambda_\Eps| \leq \vpran{\frac{1}{2} + \frac{2}{\alpha} + \frac{1}{2(2-\alpha)}} |\xi|^\alpha \,,
\qquad
   \Lambda_\Eps \to -\tilde D_2 |\xi|^\alpha \quad \text{ for each $\xi \in \R^n$} \,.
\end{align*}
where $\tilde D_2$ is defined in \eqref{def:D-2}. It is also clear from its definition in~\eqref{def:D-2} that $\tilde D_2$ is a constant (independent of $\xi$) if $\kappa$ is a constant.

\smallskip
\noindent (c) In the borderline case where $\alpha=2$, we choose $\theta(\Eps) = \Eps^2 \ln (1/\Eps)$. The choice of $\theta$ is slightly less obvious than the previous two cases but it will be clear from the estimates below. 

We again split the integration domain into $s < 1$ and $s > 1$ and apply the change of variable $\tau = \Eps |\xi| s$ in the subdomain $s > 1$ . Define
\begin{align*}
   \Lambda_{\Eps,3}
= - \frac{2 d_0 |\xi|^2}{\abs{\ln \Eps}} \int_{\Ss^{n-1}}\int_{\tau>\Eps |\xi|} \kappa(v \cdot v')\frac{\sin^2 \vpran{\frac{\tau \vpran{v \cdot e_\xi}} {2}}}{\tau^{3}} \dtau\dv \,,
\\
   \Lambda_{\Eps,4}
= - 2 \int_{\Ss^{n-1}} \int_{s<1} \kappa(v \cdot v') p(s) \frac{\sin^2 \vpran{\frac{\Eps |\xi| s \vpran{v \cdot e_\xi}} {2}}}{\Eps^2 \abs{\ln \Eps}} \ds\dv \,.
\end{align*}
First we bound $\Lambda_{\Eps,4}$ as 
\begin{align*}
   |\Lambda_{\Eps,4}| &= 2 \int_{\Ss^{n-1}} \int_{s<1} \kappa(v \cdot v') p(s) \frac{\sin^2 \vpran{\frac{\Eps  |\xi| s \vpran{v \cdot e_\xi}} {2}}}{\Eps^2 \ln(1/\Eps)} \ds\dv
\leq 
  \frac{2|\xi|^2}{\abs{\ln \Eps}} \int_{\Ss^{n-1}} \int_{s<1}
   \kappa(v \cdot v') p(s) \ds\dv
\leq \frac{2 |\xi|^2}{\abs{\ln \Eps}}. 
\end{align*}
This shows
\begin{align} \label{converg:Lambda-Eps-4}
    \Lambda_{\Eps,4} \to 0  
\qquad
   \text{as $\Eps \to 0$ for each fixed $\xi$.}
\end{align}
Next, we separate $\Lambda_{\Eps, 3}$ as 
\begin{align*}
   \Lambda_{\Eps,3}
&= \frac{2 |\xi|^2}{\abs{\ln \Eps}} \int_{\Ss^{n-1}}\int_{\Eps |\xi|}^1 \kappa(v \cdot v') \frac{\sin^2 \vpran{\frac{\tau \vpran{v \cdot e_\xi}} {2}}}{\tau^3} \dtau\dv
   +\frac{2 |\xi|^2}{\abs{\ln \Eps}} \int_{\Ss^{n-1}}\int_{1}^\infty \kappa(v \cdot v') \frac{\sin^2 \vpran{\frac{\tau \vpran{v \cdot e_\xi}} {2}}}{\tau^3} \dtau\dv
\\
&\Denote \Lambda_{\Eps, 3, 1} + \Lambda_{\Eps, 3, 2} \,.
\end{align*}
The second term $\Lambda_{\Eps, 3, 2}$ is bounded as
\begin{align*}
    \abs{\Lambda_{\Eps, 3, 2}}
    = \frac{c_1 |\xi|^2}{\abs{\ln \Eps}}
    \leq \frac{2 |\xi|^2}{\abs{\ln \Eps}} \,,
\qquad
    \text{since} \quad c_1 =  \int_{\Ss^{n-1}}\int_{1}^\infty \kappa(v \cdot v') \frac{\sin^2 \vpran{\frac{\tau \vpran{v \cdot e_\xi}} {2}}}{\tau^3} \dtau\dv \leq 2 \,.
\end{align*}
Therefore, $\Lambda_{\Eps, 3, 2}$ is bounded and 
\begin{align} \label{converg:Lambda-Eps-3-2}
    \Lambda_{\Eps,3,2} \to 0  
\qquad
   \text{as $\Eps \to 0$ for each fixed $\xi$.}
\end{align}
We are left to show the bound and limit of $\Lambda_{\Eps, 3, 1}$. 
Denote 
\begin{align*}
   h(\tau) = \int_{\Ss^{n-1}} \kappa(v \cdot v') \sin^2(\frac{\tau \vpran{v \cdot e_\xi}}{2}) \dv \,.
\end{align*}
Then by L'H\^{o}pital's rule,
\begin{align*}
   \lim_{\Eps \to 0} \frac{2}{\abs{\ln \Eps}} \int_{\Ss^{n-1}}\int_{\Eps |\xi|}^1 \kappa(v \cdot v')\frac{\sin^2 \vpran{\frac{\tau \vpran{v \cdot e_\xi}} {2}}}{\tau^{3}} \dtau\dv
&= \lim_{\Eps \to 0} \frac{2}{\ln (1/\Eps)} \int_{\Eps |\xi|}^1 \frac{h(\tau)}{|\tau|^{3}}\dtau
\\
&= \lim_{\Eps \to 0} \vpran{2\Eps h(1) - \frac{2h(\Eps |\xi|)}{\Eps^2 |\xi|^2}}
\Denote - \tilde D_3 \,,
\end{align*}
where
\begin{align} \label{def:D-3}
   \tilde D_3 = \lim_{\Eps \to 0} \frac{2h(\Eps |\xi|)}{\Eps^2 |\xi|^2}
         = \frac{1}{2} \int_{\Ss^{n-1}} \kappa(v \cdot v') \vpran{v \cdot e_\xi}^2 \dv \,.
\end{align}
By the bounds and limits for $\Lambda_{\Eps, 3, 1}$, $\Lambda_{\Eps, 3, 2}$, and $\Lambda_{\Eps, 4}$, we conclude that there exists a constant $c_0 > 0$ which is independent of $\Eps, v', \xi$ such that
\begin{align*}
    |\Lambda_\Eps| \leq c_0 |\xi|^2 \,,
\qquad
   \Lambda_\Eps \to \tilde D_3 |\xi|^2 \quad \text{ for each $\xi \in \R^n$} \,,
\end{align*}
where $\tilde D_3$ is defined in \eqref{def:D-3}. It is also clear from its definition in~\eqref{def:D-3} that $\tilde D_3$ is a constant (independent of $\xi$) if $\kappa$ is a constant. 
\end{proof}

Using Proposition~\ref{prop:Lambda-Eps}, we can now state our main theorem in more detail and show its proof.
\begin{thm}
Suppose the scattering coefficient $c$ and the cross section $\sigma$ satisfies that
\begin{align} \label{assump:sigma}
   0 < c < 1 \,,
\qquad
    \int_{\Ss^{n-1}} \sigma(v \cdot v') \dv = 1\,,
\qquad
   \sigma(v\cdot v') \geq \sigma_0 > 0
\end{align}
for some $\sigma_0>0$. Suppose the path-length distribution function $p$ satisfies \eqref{assump:p} and $\Psi_\Eps = \Psi_\Eps e^{-\int_0^s \Sigma_t(\tau) \dtau}$ is the solution to~\eqref{eq2}.
Let $\phi_\Eps, q_\Eps$ be the functions defined in \eqref{def:phi-Eps} and \eqref{def:K-Eps}. ~Then 
\begin{enumerate}
\item[(a)] there exists $\Psi_0 \in L^2(\R^n)$ such that
\begin{align*}
    \Psi_\Eps \to \Psi_0
\qquad in \,\,
   w^\ast-L^\infty(0, \infty; L^2(\R^n \times \Ss^{n-1})).
\end{align*}
\item[(b)]
there exists $ q \in L^2(\R^n \times \Ss^{n-1})$ such that  
\begin{align*}
   \phi_{\Eps} \to \Psi_0 \,,
\qquad
   q_\Eps \to q
\qquad
   \text{weakly in $L^2(\R^n \times \Ss^{n-1})$}.
\end{align*}
Moreover, with the choices of $\theta(\Eps)$ in Proposition~\ref{prop:Lambda-Eps}, the limit $\Psi_0$ satisfies
\begin{itemize}
\item[(b1)]  $ D_1 (-\Delta) \Psi_0 + (1 - c) \Psi_0 = \int_{\Ss^{n-1}} Q(x, v) \dv$ if $\alpha > 2$;

\item[(b2)]  $D_2 (-\Delta)^{\alpha/2} \Psi_0 + (1 - c) \Psi_0 = \int_{\Ss^{n-1}} Q(x, v) \dv$ if $1 < \alpha < 2$;

\item[(b3)]  $ D_3 (-\Delta) \Psi_0 + (1 - c) \Psi_0 = \int_{\Ss^{n-1}} Q(x, v) \dv$ if $\alpha = 2$,

\end{itemize}
where $D_1, D_2, D_3$ are positive constants defined as 
\begin{align*}
   D_1 
   = \frac{1}{3} \frac{\vpran{\int_0^\infty p(s) s \ds}^2 \bar\mu_0}{1 - \bar\mu_0}
       + \frac{1}{6} \int_0^\infty p(s) s^2 \ds\,,
\end{align*}
and
\begin{align*}
   D_2 = 2 d_0\int_{\Ss^{n-1}} \int_0^\infty \frac{\sin^2 \vpran{\frac{\tau \vpran{v \cdot e_\xi}} {2}}}{\tau^{\alpha + 1}} \dtau\dv\,,
\qquad
   D_3 = \frac{1}{2} \int_{\Ss^{n-1}} \vpran{v \cdot e_\xi}^2 \dv
           = \frac{1}{6} \,.
\end{align*}
where $d_0$ is the constant in~\eqref{assump:p}.

\item[(c)] Let $\eta_\Eps(x) = \int_0^\infty \int_{\Ss^{n-1}} \Psi_\Eps(s, x, v) \dv\ds$. Then there exists $\eta_0 \in L^2(\R^n)$ such that
\begin{align*}
   \eta_\Eps \to \eta_0
\qquad
   \text{weakly in $L^2(\R^n)$.}
\end{align*}
Moreover, $\eta_0 = \beta_0 \Psi_0$ where the positive constant $\beta_0$ is defined in~\eqref{def:beta-0}. Therefore $\eta_0$ satisfies similar diffusion equations as in (b1)-(b2) with the source term replaced by $\beta_0 \int_{\Ss^{n-1}} Q(x, v) \dv$.

\end{enumerate}
\end{thm}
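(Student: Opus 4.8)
The plan is to pass to the limit $\Eps \to 0$ in the averaged Fourier equation \eqref{eq:phi-Eps-4}, combining the weak compactness from Lemmas \ref{lem:phi-Eps}--\ref{lem:converg-phi} with the pointwise asymptotics of Proposition \ref{prop:Lambda-Eps}. Part (a) is precisely Theorem \ref{thm:Psi}, so it only needs to be recorded. For (b) the decisive structural fact is that testing \eqref{eq:phi-Eps-4} against the constant $1 \in \NullL$, i.e.\ integrating over $v \in \Ss^{n-1}$, annihilates the singular term: since $\int_{\Ss^{n-1}} \sigma(v\cdot v') \dv = 1$ one has $\vint{\CalL \hat u} = \vint{\hat u} - \vint{\CalK \hat u} = 0$, so $\tfrac{1}{\theta(\Eps)} \CalL \hat\phi_\Eps$ drops out after averaging. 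The remaining identity has a controllable right-hand side: by dominated convergence $\CalA_\Eps \hat\Psi_\Eps \rightharpoonup \hat\Psi_0$ and $\hat q_\Eps = \CalK_\Eps \hat Q \to \CalK \hat Q =: \hat q$ (whence $\vint{\hat q} = \hat{\vint{Q}}$), which furnishes the zeroth-order term $(1-c)\hat\Psi_0$ and the source $\int_{\Ss^{n-1}} Q \dv$.

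The heart of the argument is the limit of the averaged kernel term $\int_0^\infty \int_{\Ss^{n-1}} p(s)\, \hat\phi_\Eps(\xi,v')\, \tfrac{e^{-i\Eps v'\cdot\xi s}-1}{\theta(\Eps)} \dv'\ds$ obtained after the $\sigma$-integral collapses to $1$. I would split $\hat\phi_\Eps = \vint{\hat\phi_\Eps} + g_\Eps$ with $\vint{g_\Eps}=0$. For the isotropic part the imaginary (sine) contribution vanishes by the symmetry $v' \mapsto -v'$, and the real (cosine) contribution is exactly $\Lambda_\Eps$ of Proposition \ref{prop:Lambda-Eps} with $\kappa \equiv 1$. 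Since $\Lambda_\Eps \to -\tilde D|\xi|^\gamma$ pointwise with $|\Lambda_\Eps| \le c_0|\xi|^\gamma$, testing against $\chi \in C_c^\infty(\R^n)$ and using $\vint{\hat\phi_\Eps} \rightharpoonup \hat\Psi_0$ passes the product to the limit $-\tilde D|\xi|^\gamma \hat\Psi_0$ (a pointwise-bounded strong factor against a weak one). Here $\gamma = 2$ when $\alpha \ge 2$ and $\gamma = \alpha$ when $1<\alpha<2$, and these constants are $D_2$, $D_3$, and the $\tfrac16 \int_0^\infty p s^2 \ds$ summand of $D_1$.

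The anisotropic part $g_\Eps$ contributes only when $\alpha > 2$, and extracting it is the main obstacle, requiring a Chapman--Enskog-type expansion. The mechanism is a scaling balance: against the kernel the even part of $g_\Eps$ meets the cosine at subleading order, while the odd part meets $\sin(\Eps v'\cdot\xi s)/\theta(\Eps)$; for $\theta=\Eps^2$ this pairing is $O(1)$, whereas for $\theta = \Eps^\alpha$ with $\alpha<2$ it is $O(\Eps^{2-\alpha})$ and for $\theta = -\Eps^2\ln\Eps$ it is $O(1/\abs{\ln\Eps})$, both vanishing. For $\alpha>2$ the odd component of \eqref{eq:phi-Eps-4} shows that the isotropic part of $\hat\phi_\Eps$, pushed through the first-order expansion of the sine, generates an $O(1/\Eps)$ odd source proportional to $\vint{\hat\phi_\Eps}\,\CalK(v'\cdot\xi) = \bar\mu_0 \vint{\hat\phi_\Eps}\,(v\cdot\xi)$; balancing it against $\tfrac{1}{\theta}\CalL g_\Eps$ and inverting $\CalL$ on $(\NullL)^\perp$ via the eigenrelation $\CalL(v\cdot e_\xi) = (1-\bar\mu_0)(v\cdot e_\xi)$ from Lemma \ref{lem:CalL} forces $g_\Eps$ to be, at leading order, $\Eps$ times a multiple of $v\cdot e_\xi$ with coefficient proportional to $\bar\mu_0 (1-\bar\mu_0)^{-1} \vint{\hat\phi_\Eps} \int_0^\infty s p \ds$. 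Feeding this back into the sine pairing supplies a second factor of $\int_0^\infty s p\,\ds$ and produces exactly the $\tfrac13 \tfrac{(\int_0^\infty ps\,\ds)^2 \bar\mu_0}{1-\bar\mu_0}$ summand of $D_1$. The real work is to justify this expansion rigorously rather than formally: controlling the $O(\Eps^2)$ remainder in the weak-$L^2$ topology and commuting the $s$-integral against the tail of $p$ with the weak limit in $v$.

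Collecting terms yields, for a.e.\ $\xi$, the Fourier identity $D_i |\xi|^\gamma \hat\Psi_0 + (1-c)\hat\Psi_0 = \hat{\vint{Q}}$; inverting the Fourier transform and reading $|\xi|^\gamma$ as the symbol of $(-\Delta)^{\gamma/2}$ gives the regular equations (b1), (b3) and the fractional equation (b2) with $D_1,D_2,D_3$ as stated. Since the limiting operator $D_i(-\Delta)^{\gamma/2} + (1-c)$ is invertible for $0<c<1$, the limit $\Psi_0$ is unique, so the convergence holds along the full family and not merely a subsequence. For (c), I would test the weak-$\ast$ limit $\Psi_\Eps \rightharpoonup \Psi_0$ against the $s$-weight hidden in $\eta_\Eps$, namely the survival factor $e^{-\int_0^s \Sigma_t(\tau)\dtau}$, which lies in $L^1(0,\infty)$ precisely because $\int_0^\infty s\,p(s)\ds < \infty$; as $\Psi_0$ is independent of $s,v$ this gives $\eta_\Eps \rightharpoonup \beta_0 \Psi_0$ with $\beta_0 = \int_0^\infty s\,p(s)\ds$ the total mass of the weight, and the equation for $\eta_0$ then follows from (b) by linearity after scaling the source by $\beta_0$.
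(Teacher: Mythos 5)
Your overall architecture coincides with the paper's: Fourier transform, velocity averaging of \eqref{eq:phi-Eps-4} against the constant in $\NullL$ to annihilate $\tfrac{1}{\theta(\Eps)}\CalL\hat\phi_\Eps$, the split of $\hat\phi_\Eps$ into its velocity average plus a mean-zero remainder, Proposition~\ref{prop:Lambda-Eps} with $\kappa\equiv 1$ for the isotropic (cosine) part, and uniqueness of the limiting elliptic problem to upgrade subsequential to full convergence. Your identifications $\hat q=\CalK\hat Q$ (so that $\vint{\hat q}=\vint{\hat Q}$) and $\beta_0=\int_0^\infty e^{-\int_0^s\Sigma_t(\tau)\dtau}\ds=\int_0^\infty s\,p(s)\ds$ are both correct, and your strong-times-weak argument for passing $\Lambda_{\Eps}\vint{\hat\phi_\Eps}$ to the limit is exactly what the paper does.

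The one place where you genuinely diverge is the anisotropic contribution for $\alpha>2$, and that is also the one step you do not actually carry out. You propose a Chapman--Enskog closure: solve $\tfrac{1}{\theta}\CalL g_\Eps\approx$ (odd $O(\Eps/\theta)$ source) for $g_\Eps=\hat\phi_\Eps-\vint{\hat\phi_\Eps}$ by inverting $\CalL$ on $(\NullL)^{\perp}$, then feed $g_\Eps\approx -i\Eps\nu_0\bar\mu_0(v\cdot\xi)\vint{\hat\phi_\Eps}\int_0^\infty sp\ds$ back into the sine pairing. The coefficient this produces is indeed $\tfrac13\nu_1$, but you defer ``the real work,'' and that work is nontrivial: it needs a quantitative coercivity estimate for $\CalL$ on $(\NullL)^{\perp}$ (not contained in Lemma~\ref{lem:CalL}, which only gives Fredholmness and the kernel), an a priori bound of the form $\norm{g_\Eps}_{L^2(\Ss^{n-1})}=O(\Eps(1+\abs{\xi}))$, a self-consistent absorption of the sine kernel acting on $g_\Eps$ itself, and control of the $o(\Eps)$ remainder after division by $\Eps$ while $\vint{\hat\phi_\Eps}$ only converges weakly in $\xi$. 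The paper sidesteps the entire expansion with an exact identity: writing $v'\cdot\xi=\nu_0\CalL(v'\cdot\xi)$ and using that $\CalL$ is self-adjoint and kills constants, the pairing of $g_\Eps$ with $v'\cdot\xi$ equals $\nu_0\int_{\Ss^{n-1}}(\CalL\hat\phi_\Eps)(v'\cdot\xi)\dv'$, and $\CalL\hat\phi_\Eps$ is then replaced \emph{exactly} by $\theta(\Eps)$ times the right-hand side of \eqref{eq:phi-Eps-4}; the resulting terms are handled with only the uniform bounds of Lemma~\ref{lem:phi-Eps}, with no remainder to estimate. Either supply the coercivity and remainder estimates your route requires, or adopt the duality substitution, which turns the only delicate step of your outline into bookkeeping.
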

\begin{proof}
(a) The convergence along a subsequence is proved in Theroem~\ref{thm:Psi}.  The convergence of the full sequence will be clear from the proof of Part (b) and (c). 

\noindent (b) Integrating \eqref{eq:phi-Eps-4} in terms of $v$ to annihilate the singular term $\frac{1}{\theta(\Eps)} \CalL \hat\phi_\Eps$, we have
\begin{align*} 
    - \int_{\Ss^{n-1}} \int_{\Ss^{n-1}}
          \sigma(v \cdot v') \hat\phi_\Eps(\xi, v') 
          w_{\Eps}(\xi, v')
      \dv'
 = - (1-c) \int_{\Ss^{n-1}}\CalA_\Eps \hat\Psi_\Eps \dv
    + \int_{\Ss^{n-1}} \hat q_\Eps(\xi, v) \dv \,,
\end{align*}
where $w_\Eps$ is defined as
\begin{align} \label{def:w-Eps}
    \hat w_\Eps(\xi, v) 
    = \int_0^\infty p(s)\frac{e^{-i \Eps v \cdot \xi s} - 1}{\theta(\Eps)} \ds \,,
\end{align}
By the assumption for $\sigma$ in \eqref{assump:sigma}, the above equation can be written as 
\begin{align} \label{eq:phi-Eps-5}
    - \int_{\Ss^{n-1}}
         \hat\phi_\Eps(\xi, v') w_{\Eps}(\xi, v')
      \dv'\dv
 = - (1-c) \int_{\Ss^{n-1}}\CalA_\Eps \hat\Psi_\Eps \dv
    + \int_{\Ss^{n-1}} \hat q_\Eps(\xi, v) \dv \,.
\end{align}
The eventual diffusion equation will be obtained by passing to the limit along the subsequence $\hat\phi_{\Eps_k}$ in \eqref{eq:phi-Eps-5}. 
We study the limit of each term in \eqref{eq:phi-Eps-5} along the subsequence $\hat\phi_{\Eps_k}$ given in Lemma~\ref{lem:converg-phi}. Up to a further subsequence and an abuse of notation, suppose $\hat q_{\Eps_k} \to \hat q$ weakly in $L^2(\R^n \times \Ss^{n-1})$. Then
\begin{align} \label{limit:RHS-2-1}
      \int_{\Ss^{n-1}} \hat q_{\Eps_k}(\xi, v) \dv
\to
    \int_{\Ss^{n-1}} \hat q \dv
\quad
   \text{weakly in $L^2(\R^n)$} \,.
\end{align}
By the definition of $\hat q_\Eps$ and Lebesgue Dominated Convergence Theorem, we have $\hat q = \hat Q$ and 
\begin{align} \label{limit:RHS-2}
      \int_{\Ss^{n-1}} \hat q_{\Eps_k}(\xi, v) \dv
\to
    \int_{\Ss^{n-1}} \hat Q \dv
\quad
   \text{weakly in $L^2(\R^n)$} \,.
\end{align}

Next, let $\hat g \in C_c(\R^n)$ be arbitrary. Then by Fubini's theorem,
\begin{align*}
& \quad \,
    \int_{\R^n} \hat g(\xi)  \vpran{ \int_{\Ss^{n-1}}
      \CalA_{\Eps_k} \hat \Psi_{\Eps_k} \dv} \dxi
\\
&= \int_{\R^n} \int_{\Ss^{n-1}} 
      \int_0^\infty \int_{\Ss^{n-1}}           
      \int_0^\infty \int_{\Ss^{n-1}}  
        \hat g(\xi)  \sigma(v \cdot v') p(s) p(\tau) \hat \Psi_\Eps(\tau, \xi, \bar v)
           e^{-i \Eps v' \cdot \xi s} \dvbar \dtau\dv'\ds\dv\dxi
\\
& 
  =  \int_{\R^n} \int_{\Ss^{n-1}} 
      \int_0^\infty \int_{\Ss^{n-1}}           
      \int_0^\infty \int_{\Ss^{n-1}}  
        \hat g(\xi)  \sigma(v \cdot v') p(s) p(\tau) \hat \Psi_\Eps(\tau, \xi, \bar v)
           \vpran{e^{-i \Eps v' \cdot \xi s} -1} \dvbar \dtau\dv'\ds\dv\dxi
\\
& \hspace{0.5cm}
   +  \int_{\R^n} \int_{\Ss^{n-1}} 
      \int_0^\infty \int_{\Ss^{n-1}}           
      \int_0^\infty \int_{\Ss^{n-1}}  
        \hat g(\xi)  \sigma(v \cdot v') p(s) p(\tau) \hat \Psi_\Eps(\tau, \xi, \bar v)
        \dvbar \dtau\dv'\ds\dv\dxi
\\
& 
   \to 
   \int_{\R^n} \int_{\Ss^{n-1}} \int_{\Ss^{n-1}} 
      \int_0^\infty \int_0^\infty
         \hat g(\xi) \sigma(v \cdot v') p(s) p(\tau) \hat\Psi_0(\xi)  \ds\dtau\dv' \dv\dxi
  = \int_{\R^{n-1}}
         \hat g(\xi) \hat\Psi_0(\xi)\dxi
\end{align*}
since $\int_0^\infty s p(s) \ds < \infty$. Hence,
\begin{align} \label{limit:RHS-1}
    \int_{\Ss^{n-1}}\CalA_{\Eps_k} \hat\Psi_{\Eps_k} \dv
\to 
  \hat \Psi_0
\quad
   \text{in the sense of distributions} \,.
\end{align}
Combining \eqref{limit:RHS-2} with \eqref{limit:RHS-1}, we obtain that along the subsequence $\Eps_k$,
\begin{align} \label{converg:RHS}
   \text{Right-hand side of \eqref{eq:phi-Eps-5}} 
\longrightarrow 
- (1-c) \, \hat\Psi_0 + \int_{\Ss^{n-1}} \hat Q(\xi, v) \dv 
\quad 
\text{in the sense of distributions.}
\end{align}
To find the limit of the left-hand side of \eqref{eq:phi-Eps-5} 
we rewrite it as
\begin{align*}
   \int_{\Ss^{n-1}}
         \hat\phi_\Eps(\xi, v') \hat w_{\Eps}(\xi, v') \dv'
& = \int_{\Ss^{n-1}}
       \vpran{\hat\phi_\Eps(\xi, v') - \vint{\hat\phi_\Eps}(\xi)} \hat w_{\Eps}(\xi, v')  \dv'
      + \int_{\Ss^{n-1}} 
            \vint{\hat\phi_\Eps}(\xi) \hat w_{\Eps}(\xi, v') \dv'
\\
& \Denote \EpsJ_{1} + \EpsJ_{2} \,,
\end{align*}
where we have introduced the notation
$$
\langle\cdot\rangle = \int \cdot\ dv
$$
for velocity averages. We find the limits of $\EpsJ_{1}$ and $\EpsJ_{2}$ separately.

\medskip
\Ni \underline{\em Limit of $\EpsJ_1$.} Recall the definition of $w_\Eps$ in \eqref{def:w-Eps} and rewrite $J_1^\Eps$ as
\begin{align*}
   J_1^\Eps
& = -i \int_{\Ss^{n-1}} \int_0^\infty
          \vpran{\hat\phi_\Eps(\xi, v') - \vint{\hat\phi_\Eps}(\xi)} 
          p(s) \frac{\sin(\Eps v' \cdot \xi s)}{\theta(\Eps)}\ds \dv'
\\
& \quad \,
      - \int_{\Ss^{n-1}} \int_0^\infty
          \vpran{\hat\phi_\Eps(\xi, v') - \vint{\hat\phi_\Eps}(\xi)} 
          p(s) \frac{1 - \cos(\Eps v' \cdot \xi s)}{\theta(\Eps)}\ds \dv'
\\
& = - i \frac{\Eps}{\theta(\Eps)} \int_{\Ss^{n-1}} \int_0^\infty
          \vpran{\hat\phi_\Eps(\xi, v') - \vint{\hat\phi_\Eps}(\xi)} 
          p(s) \vpran{v' \cdot \xi s} \ds \dv'
\\
& \quad \,
     - i \frac{\Eps}{\theta(\Eps)} \int_{\Ss^{n-1}} \int_0^\infty
          \vpran{\hat\phi_\Eps(\xi, v') - \vint{\hat\phi_\Eps}(\xi)} 
          p(s) \vpran{v' \cdot \xi s} \vpran{\frac{\sin(\Eps v' \cdot \xi s )}{\Eps v' \cdot \xi s} - 1}\ds \dv'
\\
& \quad \,
      - \int_{\Ss^{n-1}} \int_0^\infty
          \vpran{\hat\phi_\Eps(\xi, v') - \vint{\hat\phi_\Eps}(\xi)} 
          p(s) \frac{1 - \cos(\Eps v' \cdot \xi s)}{\theta(\Eps)}\ds \dv'
\\
& \Denote 
    J_{1,1}^\Eps + J_{1,2}^\Eps + J_{1,3}^\Eps \,.
\end{align*}
First we show that 
\begin{align} \label{limit:Eps-1-2-3}
   \EpsJ_{1,2} \to 0
\quad \text{and} \quad
   \EpsJ_{1,3} \to 0
\qquad
\text{in $\CalD'$}.
\end{align}
Indeed, we have the bounds
\begin{align*}
   \frac{\Eps}{\theta(\Eps)}
   \abs{\frac{\sin(\Eps v' \cdot \xi s )}{\Eps v \cdot \xi s} - 1}
\leq 
   \frac{1}{6}\frac{\Eps}{\theta(\Eps)}
   \vpran{\Eps v' \cdot \xi s}^2
\leq \frac{1}{6} \Eps |\xi|^2 \,.
\end{align*}
By the uniform bound of $\hat\phi_\Eps$ in $L^2(\R^n \times \Ss^{n-1})$, we have $\EpsJ_{1,2} \to 0$ in $L^2(\R^n) \times \Ss^{n-1}$. Moreover, for all the choices of $\theta(\Eps)$ we have
\begin{align*}
   \abs{\frac{1 - \cos(\Eps v' \cdot \xi s)}{\theta(\Eps)}}
\leq 
  \frac{1}{2} |\xi|^2 \,.
\end{align*}
Thus by $\hat\phi_\Eps - \vint{\hat\phi_\Eps} \to 0$ in $L^2(\R^n \times \Ss^{n-1})$, we have $\EpsJ_{1,3} \to 0$ in $\CalD'$. Therefore~\eqref{limit:Eps-1-2-3} holds. 

The limit of $\EpsJ_{1,1}$ is more involved. First, by Lemma~\ref{lem:CalL}, we rewrite $v' \cdot \xi$ as 
\begin{align} \label{def:nu-0}
    v' \cdot \xi = \nu_0 \CalL(v' \cdot \xi) \,,
\qquad
    \nu_0 = \frac{1}{1 - \bar\mu_0}  \,.
\end{align}
Then $J_{1,1}^\Eps$ becomes
\begin{align*}
   J_{1,1}^\Eps
&= - i 
     \frac{\nu_0 \, \Eps}{\theta(\Eps)} 
     \int_{\Ss^{n-1}} \int_0^\infty
          \vpran{\hat\phi_\Eps(\xi, v') - \vint{\hat\phi_\Eps}(\xi)} 
          p(s) \CalL\vpran{v' \cdot \xi} s \ds \dv'
\\
&= - i 
     \frac{\nu_0 \, \Eps}{\theta(\Eps)} 
     \int_{\Ss^{n-1}} \int_0^\infty
          \CalL\vpran{\hat\phi_\Eps(\xi, v') - \vint{\hat\phi_\Eps}(\xi)} 
          p(s) \vpran{v' \cdot \xi} s \ds \dv'
\\
&= - i 
     \frac{\nu_0 \, \Eps}{\theta(\Eps)} 
     \int_{\Ss^{n-1}} \int_0^\infty
          \CalL\hat\phi_\Eps(\xi, v') 
          p(s) \vpran{v' \cdot \xi} s \ds \dv' \,.
\end{align*}
By the equation for $\CalL \hat\phi_\Eps$ in \eqref{eq:phi-Eps-4},  we have
\begin{align*}
   \EpsJ_{1,1}
&   = -i \nu_0 \, \Eps
       \int_{\Ss^{n-1}} \vpran{\int_{\Ss^{n-1}}
          \sigma(v \cdot v') \hat\phi_\Eps(\xi, v) \hat w_\Eps(\xi, v)
      \dv} (v' \cdot \xi) \vpran{\int_0^\infty p(s) s \ds} \dv'
\\
 &  \quad \,   
   + i (1-c) \nu_0 \Eps \int_{\Ss^{n-1}} \CalA_\Eps \hat\Psi_\Eps(\xi, v') (v' \cdot \xi)\dv'
    - i \nu_0 \Eps  \int_{\Ss^{n-1}} \hat q_\Eps(\xi, v') (v' \cdot \xi)\dv'
\\
&\Denote \EpsJ_{1, 1,1} + \EpsJ_{1, 1,2} + \EpsJ_{1, 1,3} \,,
\end{align*}
where $\hat w_\Eps$ is definded in~\eqref{def:w-Eps}. 
By the uniform bounds of $\CalA_\Eps \hat\Psi_\Eps$ and $\hat q_\Eps$ in Lemma~\ref{lem:phi-Eps}, we have
\begin{align} \label{converg:J-1-2-3}
   \EpsJ_{1, 1, 2}, \, \EpsJ_{1, 1, 3} \to 0 
\quad
   \text{in the sense of distributions. }
\end{align}
To show the convergence of $\EpsJ_{1,1,1}$, we separate the cases where $\alpha > 2$ and $\alpha \leq 2$.  First, if $\alpha \leq 2$, then
\begin{align*}
   \EpsJ_{1,1,1}
&\leq 
   c_0 \vpran{1 + |\xi|^2} \frac{\Eps^2}{\theta(\Eps)}
  \abs{\int_{\Ss^{n-1}} \int_{\Ss^{n-1}} \sigma(v \cdot v') \hat\phi_\Eps(\xi, v') \dv\dv'}
\\
&\leq 
     c_0 \vpran{1 + |\xi|^2} \frac{\Eps^2}{\theta(\Eps)} \norm{\hat\phi_\Eps(\xi, \cdot)}_{L^2(\Ss^{n-1})} \,.
\end{align*}
In the case where $\alpha \leq 2$, we have 
\begin{align*}
    \frac{\Eps^2}{\theta(\Eps)} \to 0 
\qquad
   \text{as $\Eps \to 0$} \,.
\end{align*}
Therefore, if $\alpha \leq 2$, then
\begin{align*} 
    \EpsJ_{1,1,1} \to 0  
\qquad
   \text{ in the sense of distributions as $\Eps \to 0$.}
\end{align*}
Together with~\eqref{converg:J-1-2-3}, we have
\begin{align} \label{converg:EpsJ-1-1-1}
    \EpsJ_{1,1} \to 0 
\qquad
   \text{ in the sense of distributions as $\Eps \to 0$, }
\qquad
  \alpha \leq 2 \,.
\end{align}

In the case where $\alpha > 2$, we have $\theta(\Eps) = \Eps^2$. Separating the real and imaginary parts of $\EpsJ_{1,1}$, we get
\begin{align*}
   Re \vpran{\EpsJ_{1,1,1}}
= - \nu_0  \vpran{\int_0^\infty p(s) s \ds}
       \int_{\Ss^{n-1}} \int_{\Ss^{n-1}} \int_0^\infty
          \sigma(v \cdot v') \hat\phi_\Eps(\xi, v) 
          \vpran{v' \cdot \xi} p(s)
          \frac{\sin(\Eps v \cdot \xi s)}{\Eps}
      \ds \dv \dv'
\end{align*}
and
\begin{align*}
   Im \vpran{\EpsJ_{1,1,1}}
= - \nu_0  \vpran{\int_0^\infty p(s) s \ds}
       \int_{\Ss^{n-1}} \int_{\Ss^{n-1}} \int_0^\infty
          \sigma(v \cdot v') \hat\phi_\Eps(\xi, v) 
          \vpran{v' \cdot \xi} p(s)
          \frac{1 - \cos(\Eps v \cdot \xi s)}{\Eps}
      \ds \dv \dv' \,,
\end{align*}
where $Re\vpran{\EpsJ_{1,1,1}}$ and $Im\vpran{\EpsJ_{1,1,1}}$ are the real and imaginary parts of $\EpsJ_{1,1,1}$ respectively. Since $\hat\phi_{\Eps_k} \to \hat\Psi_0$ in $L^2(\R^n \times \Ss^{n-1})$, we have 
\begin{align} \label{limit:Re-EpsJ-1-1-1}
   Re \vpran{J_{1,1,1}^{\Eps_k}} 
\to 
   - \nu_0 \vpran{\int_0^\infty p(s) s \ds}^2 \hat \Psi_0
   \int_{\Ss^{n-1}} \int_{\Ss^{n-1}}
      \sigma(v \cdot v') \vpran{v \cdot \xi} \vpran{v' \cdot \xi} \dv'\dv
\quad
  \text{in $\CalD'$}
\end{align}
and
\begin{align} \label{limit:Im-EpsJ-1-1-1}
   Im \vpran{J_{1,1,1}^{\Eps_k}} \to 0
\quad
  \text{in $L^2(\R^n \times \Ss^{n-1})$} \,.
\end{align}
In the above convergences we have applied the bounds and limits
\begin{align*}
    \abs{\frac{\sin(\Eps v \cdot \xi s)}{\Eps}} 
\leq 
    |\xi| s \,,
\qquad
   \abs{\frac{1 - \cos(\Eps v \cdot \xi s)}{\Eps}}
\leq 
   \frac{1}{2} |\xi|^2 s^2 \Eps \,,
\qquad
  \frac{\sin(\Eps v \cdot \xi s)}{\Eps}
\to v \cdot \xi s  \quad \text{pointwise as $\Eps \to 0$} \,.
\end{align*}
The limit of $Re \vpran{\EpsJ_{1,1,1}}$ can be simplified as
\begin{align} \label{limit:Re-EpsJ-1-1-1-Simp}
& \quad \,
   - \nu_0 \vpran{\int_0^\infty p(s) s \ds}^2 \hat \Psi_0
   \int_{\Ss^{n-1}} \int_{\Ss^{n-1}}
      \sigma(v \cdot v') \vpran{v \cdot \xi} \vpran{v' \cdot \xi} \dv'\dv \nn
\\
&=    - \nu_0 \vpran{\int_0^\infty p(s) s \ds}^2 \hat \Psi_0
   \int_{\Ss^{n-1}} \vpran{\int_{\Ss^{n-1}}
      \sigma(v \cdot v') \vpran{v \cdot \xi} \dv} \vpran{v' \cdot \xi} \dv'
\\
&=    - \nu_0 \vpran{\int_0^\infty p(s) s \ds}^2 
      \vpran{\frac{1}{2} \int_{-1}^1 \mu \sigma(\mu) \dmu} \hat \Psi_0(\xi)
   \int_{\Ss^{n-1}}   \vpran{v' \cdot \xi}^2 \dv' \nn
\\
& = -\frac{1}{3} \nu_1 |\xi|^2 \hat \Psi_0(\xi) \,, \nn
\end{align}
where $\nu_0$ is defined in~\eqref{def:nu-0} and the constant $\nu_1$ is
\begin{align} \label{def:nu-1}
   \nu_1 
   =  \frac{\vpran{\int_0^\infty p(s) s \ds}^2
       \bar\mu_0}{1-\bar\mu_0} \,.
\end{align}
Combining~\eqref{converg:J-1-2-3}, \eqref{limit:Re-EpsJ-1-1-1}, \eqref{limit:Im-EpsJ-1-1-1}, and \eqref{limit:Re-EpsJ-1-1-1-Simp}, we have
\begin{align} \label{limit:EpsJ-1-1-2}
   J_{1,1}^{\Eps_k} \to  -\frac{1}{3} \nu_1 |\xi|^2 \hat \Psi_0(\xi)
\qquad
   \text{in $\CalD'$} \,,
\qquad
   \alpha > 2 \,.
\end{align}
As a summary, we have
\begin{align} \label{limit:EpsJ-1}
   J_1^{\Eps_k} \to 
   \begin{cases}
     0 \,, & \alpha \leq 2 \,, \\[2pt]
     -\frac{1}{3} \nu_1 |\xi|^2 \hat \Psi_0(\xi) \,,
     & \alpha > 2
   \end{cases}
\qquad
\text{in $\CalD'$ as $\Eps \to 0$.}
\end{align}

\medskip


\Ni \underline{\em Limit of $\EpsJ_2$.} To find the limit of $\EpsJ_2$, we make use of the symmetry of the integral and obtain that
\begin{align} \label{def:J-Eps-2}
    \EpsJ_{2} 
    = \vpran{ \int_{\Ss^{n-1}} \hat w_\Eps(\xi, v) \dv} \vint{\hat\phi_\Eps}(\xi)
    = \vpran{ \int_{\Ss^{n-1}} \int_0^\infty p(s) \frac{\cos(\Eps v \cdot \xi s) - 1}{\theta(\Eps)} \ds\dv} \vint{\hat\phi_\Eps}(\xi) \,.
\end{align}
Applying Proposition~\ref{prop:Lambda-Eps} with $\kappa(v \cdot v') = 1$ and the weak convergence of $\vint{\hat\phi_{\Eps_k}}$ in Lemma~\ref{lem:converg-phi}, we have
\begin{align} \label{limit:J-Eps-2}
    J_2^{\Eps_k} \to 
    \begin{cases}
      - \tilde D_1 |\xi|^{2} \hat\Psi_0 \,, & \alpha > 2 \,,  \\[2pt]
      - \tilde D_2 |\xi|^{\alpha} \hat\Psi_0 \,, & 1 < \alpha < 2 \,, \\[2pt]
      - \tilde D_3 |\xi|^{2} \hat\Psi_0 \,, & \alpha = 2
    \end{cases}
\qquad
   \text{weakly in $L^2(\R^n)$.}
\end{align}
The $\tilde D_j$'s correspond to the parameters in the three cases in Proposition~\ref{prop:Lambda-Eps} with $\kappa \equiv 1$. Since $\kappa$ is a constant, the special case in Proposition~\ref{prop:Lambda-Eps} applies and all the coefficients $\tilde D_j$'s are constants. 

Combining \eqref{converg:RHS}, \eqref{limit:EpsJ-1}, and \eqref{limit:J-Eps-2} we obtain the desired diffusion equation for $\Psi_0$. Moreover, since the solution to the diffusion equation in each case is unique in the space $L^2(\R^n)$, the limit holds along the full sequence $\{\hat\phi_\Eps\}$. 

\medskip

\noindent (c) Note that by $\int_0^\infty s p(s) \ds < \infty$, we have
\begin{align*}
    e^{-\int_0^s \Sigma_t(\tau) \dtau} \in L^1 \cap L^\infty(0, \infty) \,.
\end{align*}
Hence, for any $h \in L^2(\R^n)$, we have
\begin{align*}
    \int_0^\infty \int_{\R^n} \int_{\Ss^{n-1}}
       h(x) \Psi_\Eps(s, x, v) e^{-\int_0^s \Sigma_t(\tau) \dtau}
       \dv \dx \ds
\longrightarrow
  \beta_0 \int_0^\infty h(x) \Psi_0(x) \dx
\qquad
  \text{as $\Eps \to 0$}.
\end{align*}
where 
\begin{align} \label{def:beta-0}
    \beta_0 = \int_0^\infty e^{-\int_0^s \Sigma_t(\tau) \dtau} \dtau < \infty \,.
\end{align}
Therefore, we have
\begin{align*}
    \eta_\Eps = \int_0^\infty \int_{\Ss^{n-1}}\Psi_\Eps(s, x, v) \dv \ds
\to
    \beta_0 \Psi_0
\qquad
   \text{weakly in $L^2(\R^n)$.}
\end{align*}
By Part (a), the limiting equations for $\beta_0 \Psi_0$ are in the same format with the source term replaced by $\beta_0 \int_{\Ss^{n-1}} q(x, v) \dv$.
%
%
\end{proof}

\begin{rmk}
Note that in the case where $\alpha > 2$, there are two parts that contribute to the diffusion coefficient $D_1$ such that
\begin{align*}
 D_1 
   = \frac{1}{3} \nu_1 + \tilde D_1
   = \frac{1}{3} \nu_1 + \frac{1}{2} \vpran{\int_{\Ss^{n-1}} \vpran{v \cdot e_{\xi}}^2 \dv} D_0
   = \frac{1}{3} \nu_1 + \frac{1}{6}  D_0 \,,
\end{align*}
where $\nu_1$ and $D_0$ are defined in~\eqref{def:nu-1} and~\eqref{cond:p-1} respectively. This coefficient is consistent with the one in~\cite{Lar07} and captures anisotropic scattering. Interestingly, the anisotropy of the scattering vanishes from the limit equation in the heavy-tail case.
\end{rmk}

\section{Concluding Remarks and Future Work}
\label{sec:lorentz}
In a series of papers, Golse et al.\ (for a review cf.\ \cite{Gol12}), and independently Marklof \& Str\"ombergsson \cite{MarStr11} show that an equation similar to the non-classical transport equation can be derived from particle transport in a regular lattice. A test particle moves between obstacles that are placed on a regular lattice, and undergoes specular reflections. In the Boltzmann-Grad limit of shrinking obstacles, while simultaneously increasing their number so that the collision frequency is fixed, one obtains a kinetic equation that contains two seemingly unphysical memory variables, namely the distance to the next collision (similar to the variable $s$ in non-classical transport), as well as the impact factor for the next collision. This is the so-called periodic Lorentz gas equation. 

In 2D, an explicit path-length distribution can be computed. Translated into our notation, it reads
$$
p(s) = 
\begin{cases}  
\frac{24}{\pi^2} &\text{if } 0\leq s <\frac12,\\ 
\frac{24}{\pi^2}(\frac{1}{2s}+2(1-\frac{1}{2s})^2\ln(1-\frac{1}{2s})-\frac12 (1-\frac{1}{s})^2\ln(1-\frac{1}{s}))&\text{if } 0\leq s <\frac12.
\end{cases}
$$
As $s\to\infty$, this path length distribution behaves like
$$
p(s) \sim \frac{2}{\pi^2}\frac{1}{s^3} +\mathcal{O}(\frac{1}{s^4}).
$$
This means that the path length distribution of the periodic Lorentz gas corresponds exactly to the borderline case between classical and anomalous diffusion, as its second moment diverges logarithmically. We thus expect a classical diffusion equation with a non-classical coefficient in the asymptotic limit. For this simplified equation, this reproduces the result of Marklof \& T\'oth \cite{MarTot14} who proved a superdiffusive central limit theorem directly for the particle billards underlying the periodic Lorentz gas equation. They showed that the periodic Lorentz gas is superdiffusive, but only logarithmically.

There are several open topics related to non-classical transport. Among them are the formulation of correct boundary and interface conditions for heterogeneous media. In these media, it is also open how a fractional diffusion limit might look like. To study these questions, it will be necessary to generalize the classical kinetic theory technique to derive the diffusion limit, namely Hilbert expansion, to the fractional case. This has been done in \cite{AbdMelPue11}, although the decomposition that was used appears to be heavily inspired by the Fourier analysis. Furthermore, the asymptotic limit of the periodic Lorentz gas equation including impact factor should be studied, to see if the results of Marklof \& T\'oth \cite{MarTot14} can be retrieved by kinetic theory techniques.

\bibliographystyle{amsxport}
\bibliography{Radlit}

\end{document}